\theoremstyle{plain}
\newtheorem{thm}{Theorem}[section]
\newtheorem{lem}[thm]{Lemma}
\newtheorem{cor}[thm]{Corollary}
\newtheorem{prop}[thm]{Proposition}
\newtheorem{thma}{Theorem}
\theoremstyle{definition}
\newtheorem{defn}[thm]{Definition}
\newtheorem{rem}[thm]{Remark}
\newtheorem{example}[thm]{Example}
\newcommand{\C}{\mathbb{C}}
\newcommand{\sym}{\textsc{Sym}}
\newcommand{\sgn}{{\operatorfont sgn}}
\newcommand{\partition}{\textsc{Part}}
\newcommand{\ba}{\mathbf{a}}
\newcommand{\bz}{\mathbf{z}}
\newcommand{\by}{\mathbf{y}}
\newcommand{\bj}{\mathbf{J}}
\newcommand{\lt}{\textsc{lt}}
\newcounter{joncomments}
\newcounter{michaelcomments}
\keywords{Polynomials, critical values, Dyson's conjecture, stratifications}
\subjclass[2010]{Primary 30C10,30C15; Secondary 05A10,57N80}
\begin{document}
	
\title[Critical Points and Critical Values]{Critical points, critical values, 
and a determinant identity for complex polynomials}
\author{Michael Dougherty} 
    \email{michael.dougherty@colby.edu}
    \address{Department of Mathematics and Statistics, Colby College,
    Waterville, ME 04901}
\author{Jon McCammond}
    \email{jon.mccammond@math.ucsb.edu}
    \address{Department of Mathematics, UC Santa Barbara, 
    Santa Barbara, CA 93106} 

\begin{abstract}
    Given any $n$-tuple of complex numbers, one can canonically define 
    a polynomial of degree $n+1$ that has the entries of this $n$-tuple 
    as its critical points.  In 2002, Beardon, Carne, and Ng studied a 
    map $\theta\colon \C^n\to \C^n$ which outputs the critical values 
    of the canonical polynomial constructed from the input, and they 
    proved that this map is onto. Along the way, they showed that $\theta$ 
    is a local homeomorphism whenever the entries of the input are 
    distinct and nonzero, and, implicitly, they produced a polynomial 
    expression for the Jacobian determinant of $\theta$. In this 
    article we extend and generalize both the local homeomorphism result 
    and the elegant determinant identity to analogous situations where 
    the critical points occur with multiplicities. This involves 
    stratifying $\C^n$ according to which coordinates are equal and 
    generalizing $\theta$ to a similar map $\C^\ell \to \C^\ell$ where 
    $\ell$ is the number of distinct critical points. The more 
    complicated determinant identity that we establish is closely 
    connected to the multinomial identity known as Dyson's conjecture.
\end{abstract}

\maketitle


Let $p(z) \in \C[z]$ be a polynomial and let $z_0\in \C$ be a complex 
number.  If $p'(z_0)=0$, then $z_0$ is a \emph{critical point of $p$} 
and its image $p(z_0)$ is a \emph{critical value of $p$}. For any 
$n$-tuple $\bz = (z_1,\ldots,z_n) \in \mathbb{C}^n$, one can define a 
canonical polynomial $p = p_\bz$ that has the entries of $\bz$ as its 
critical points:
\[p(z) = p_\bz(z) = \int_0^z (w-z_1) \cdots (w-z_n)\ dw\]
Beardon, Carne, and Ng used this polynomial to define a map 
$\theta\colon\C^n \to \C^n$ which sends $\bz=(z_1,\ldots,z_n)$ to 
$\theta(\bz) 
= (\theta_1(\bz),\ldots,\theta_n(\bz)) 
= (p(z_1),\ldots,p(z_n))$ 
with $p=p_\bz$. Note that $\theta$ sends the critical points of $p$ to 
the critical values of $p$. Beardon, Carne and Ng show that every 
$n$-tuple of complex numbers arises as the critical values of some 
polynomial by proving that this map $\theta$ is surjective \cite{beardon02}.

The Jacobian matrix $\bj = \bj(\bz)$ of the map $\theta$ at $\bz$ is 
the $n\times n$ matrix with $(i,j)$-entry given by 
$(\bj)_{ij} = \frac{\partial}{\partial z_i} \theta_j(\bz) 
= \frac{\partial}{\partial z_i} p(z_j)$. 
As part of their proof, Beardon, Carne, and Ng show that $\bj$ is 
invertible whenever $\bz$ has distinct nonzero entries. Our first 
theorem provides an alternate proof via an explicit computation for the 
determinant of $\bj$. For any positive integer $n$, we use $[n]$ to 
denote the set $\{1,\ldots,n\}$.

\begin{thma}\label{thma:beardon-det}
    Let $\bz = (z_1,\ldots,z_n)\in \mathbb{C}^n$ and let 
    $\bj = \bj(\bz)$ be the Jacobian matrix defined above. The 
    Jacobian determinant factors as follows:
    \[
        \det \bj = \frac{1}{n!} \left( \prod_{j\in [n]} (-z_j)\right) 
        \left( \prod_{\substack{j,k\in [n]\\ j\neq k}} (z_k-z_j)\right)  
    \]
	Thus $\bj$ is invertible if and only if
    $z_1,\ldots, z_n$ are distinct and nonzero.
\end{thma}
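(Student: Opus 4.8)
The plan is to rewrite the entries of $\bj$ in a uniform closed form, identify $\det\bj$ up to a scalar by a divisibility-and-degree argument, and then pin down the scalar by induction on $n$.

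First I would unwind the definition $(\bj)_{ij} = \frac{\partial}{\partial z_i} p_\bz(z_j)$ with the chain rule, separating the dependence of $p_\bz(z_j)$ on the parameters $z_1,\dots,z_n$ from its dependence on the evaluation point $z_j$. The evaluation-point contribution is $p_\bz'(z_j)\,\frac{\partial z_j}{\partial z_i}$, but $p_\bz'(z_j) = \prod_{\ell\in[n]}(z_j-z_\ell)=0$, so that term vanishes for \emph{every} $i$, including $i=j$. What survives is the parameter derivative, so that
\[
  (\bj)_{ij} \;=\; \frac{\partial}{\partial z_i}\!\int_0^{z_j}\prod_{\ell\in[n]}(w-z_\ell)\,dw \;=\; -\int_0^{z_j}\prod_{\ell\neq i}(w-z_\ell)\,dw \;=\; -Q_i(z_j),
\]
where $Q_i(z)=\int_0^z\prod_{\ell\neq i}(w-z_\ell)\,dw$. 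Hence $\det\bj = (-1)^n F(\bz)$ with $F(\bz):=\det\bigl[\,Q_i(z_j)\,\bigr]_{i,j\in[n]}$, and it suffices to evaluate $F$.

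Next I would show that $F$ is forced to be a scalar multiple of $\bigl(\prod_{j\in[n]} z_j\bigr)\prod_{j<k}(z_k-z_j)^2$. The change of variables $w\mapsto\lambda w$ in the integral shows each $Q_i(z_j)$ is homogeneous of degree $n$ in $\bz$, so $F$ is homogeneous of degree $n^2$. Since $Q_i(0)=0$, setting $z_m=0$ annihilates the $m$-th column of the matrix, so $z_m\mid F$ for each $m$. The key point is the \emph{double} vanishing along each diagonal $z_j=z_k$: the identity $z_j=z_k$ not only makes columns $j$ and $k$ equal, it also makes $Q_j=Q_k$ as polynomials (both become $\int_0^z(w-z_j)\prod_{\ell\neq j,k}(w-z_\ell)\,dw$), so rows $j$ and $k$ agree too. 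Concretely, after the column operation that replaces column $k$ by column $k$ minus column $j$, one factor of $z_k-z_j$ pulls out of that column, and the reduced matrix still has two equal rows on $z_j=z_k$; this yields $(z_k-z_j)^2\mid F$ as an identity of polynomials. Since $\prod_j z_j$ together with the $\binom n2$ linear forms $z_k-z_j$ are pairwise non-associate irreducibles, $\bigl(\prod_j z_j\bigr)\prod_{j<k}(z_k-z_j)^2$ divides $F$; this divisor has degree $n+2\binom n2=n^2=\deg F$, so $F(\bz)=c_n\bigl(\prod_{j\in[n]}z_j\bigr)\prod_{j<k}(z_k-z_j)^2$ for a constant $c_n$. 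The double-vanishing step is the main obstacle: the single vanishing is the usual equal-columns observation, but the second power needs the extra input that the rows collide, and one must phrase the column-difference argument carefully as a polynomial identity rather than a limiting statement.

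Finally I would determine $c_n$ by induction, comparing the coefficient of the top power $z_n^{2n-1}$ on both sides. On the right it equals $c_n\,z_1\cdots z_{n-1}\prod_{j<k\le n-1}(z_k-z_j)^2$, i.e.\ $\tfrac{c_n}{c_{n-1}}$ times the analogous determinant $F(z_1,\dots,z_{n-1})$ for the $(n-1)$-tuple. On the left, a degree count in $z_n$ shows that only permutations $\sigma$ fixing $n$ can contribute: $Q_n(z_n)$ then supplies its leading term $\tfrac1n z_n^n$, while each remaining entry $Q_i(z_{\sigma(i)})$ with $i,\sigma(i)\le n-1$ supplies only its $z_n$-linear part $-z_n\int_0^{z_{\sigma(i)}}\prod_{\ell\in[n-1]\setminus\{i\}}(w-z_\ell)\,dw$, and the signed sum reassembles into $\tfrac{(-1)^{n-1}}{n}\,F(z_1,\dots,z_{n-1})$. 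Therefore $c_n=\tfrac{(-1)^{n-1}}{n}\,c_{n-1}$ with $c_1=1$, so $c_n=(-1)^{\binom n2}/n!$. Substituting back and collecting signs gives
\[
  \det\bj \;=\; (-1)^n F(\bz) \;=\; \frac{1}{n!}\Bigl(\prod_{j\in[n]}(-z_j)\Bigr)\prod_{j<k}(z_k-z_j)(z_j-z_k),
\]
which is the asserted identity; the final invertibility claim is then immediate, and the cases $n=1,2$ provide a quick check on the sign bookkeeping.
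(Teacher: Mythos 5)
Your proof is correct, but it takes a noticeably different route from the paper's. The paper proves Theorem~A as the special case $\ba=(1,\ldots,1)$ of the general multiplicity statement: it keeps the entries $\frac{\partial}{\partial z_i}p(z_j)$ as they are, uses derivative sequences and iterated integration by parts (the ``emphasized'' antiderivatives $\tilde{p}_{\ba,j}$) to show that \emph{each entry} of a column difference is divisible by $(z_k-z_j)^{2}$ and each entry of column $j$ by $z_j$, and then pins down the constant by a lexicographic leading-term recursion. You instead first simplify the entries to $(\bj)_{ij}=-Q_i(z_j)$ with $Q_i(z)=\int_0^z\prod_{\ell\neq i}(w-z_\ell)\,dw$, using the observation that the evaluation-point term dies because $p'(z_j)=0$ --- a clean reduction the paper never makes --- and then obtain the square $(z_k-z_j)^2$ not entrywise but from the row-and-column symmetry of $[Q_i(z_j)]$: one factor from the column operation, a second because rows $j$ and $k$ of the quotient matrix coincide on $z_j=z_k$. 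Your constant determination (coefficient of $z_n^{2n-1}$, induction on $n$, only permutations fixing $n$ contribute) is the same in spirit as the paper's leading-term induction. What your approach buys is a shorter, more self-contained argument for the multiplicity-one case with no antiderivative machinery; what it gives up is generality, since the row coincidence $Q_j=Q_k$ on $z_j=z_k$ has no direct analogue when the critical points carry multiplicities $a_j\neq a_k$, which is exactly where the paper's entrywise divisibility via Lemma~\ref{lem:two-anti} earns its keep. One small point you should make explicit: after replacing column $k$ by $(\mathrm{col}_k-\mathrm{col}_j)/(z_k-z_j)$, the equality of rows $j$ and $k$ on $z_j=z_k$ is immediate in every column except the new column $k$, where it requires the extra identity
\[
Q_j(z)-Q_k(z)=(z_j-z_k)\int_0^z\prod_{\ell\neq j,k}(w-z_\ell)\,dw,
\]
from which the quotient entries in rows $j$ and $k$ are seen to vanish (or at least agree) on $z_j=z_k$; as written you assert the equal-rows claim without this check, but it is a one-line fix.
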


We prove a generalization of this determinant identity by 
focusing on the case where the critical points have specified 
multiplicities. Let $\bz = (z_1,\ldots,z_m) \in \mathbb{C}^m$
and let $\ba = (a_1,\ldots,a_m)$ be an $m$-tuple 
of positive integers with $n=a_1+\cdots +a_m$.
If we define the polynomial $p_\ba = p_{\bz,\ba}$ by the formula 
\[
    p_\ba(z) = p_{\bz,\ba}(z) 
    = \int_0^z (w-z_1)^{a_1} \cdots (w-z_m)^{a_m}\ dw,
\] 
then $p_\ba$ is the unique polynomial of degree $n+1$ which has
monic derivative, no constant term, and has $z_i$ as a critical point
with multiplicity $a_i$ for each $i\in [m]$.  Then the 
map $\theta_\ba\colon\C^m \to \C^m$, defined by sending each
$\bz=(z_1,\ldots,z_m) \in \mathbb{C}^m$ to $\theta_{\ba}(\bz) 
= (\theta_{\ba,1}(\bz),\ldots,\theta_{\ba,m}(\bz)) 
= (p_\ba(z_1),\ldots,p_\ba(z_m))$, takes the critical points 
of $p_\ba$ to the critical values of $p_\ba$ with the 
appropriate multiplicities. Our main theorem describes a 
factorization of the determinant of the $m\times m$ Jacobian 
matrix $\bj_\ba$, defined by 
$(\bj_\ba)_{ij} = \frac{\partial}{\partial z_i} \theta_{\ba,j}(\bz) 
= \frac{\partial}{\partial z_i} p_\ba(z_j)$.

\begin{thma}\label{thma:main-specific}
    Let $\bz = (z_1,\ldots,z_m) \in \mathbb{C}^m$, let 
    $\ba = (a_1,\ldots,a_m)$ be an $m$-tuple of positive integers 
    with $a_1+\cdots + a_m = n$, and let $\bj_\ba = \bj_\ba(\bz)$ 
    be the Jacobian matrix defined above. The Jacobian determinant 
    factors as follows:
	\[
	    \det \bj_\ba = \frac{1}{\binom{n}{a_1,\ldots,a_m}}
	    \left(\prod_{j\in [m]} (-z_j)^{a_j}\right)
	    \left(\prod_{\substack{j,k\in [m]\\j\neq k}}
	    (z_k-z_j)^{a_j}\right)
	\]
	Thus $\bj_\ba$ is invertible if and only if $z_1,\ldots,z_m$ 
	are distinct and nonzero.
\end{thma}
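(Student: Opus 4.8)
\section*{Proof proposal}

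The plan is to turn the claim into a polynomial identity in $\bz$ and then pin down the constant by a degeneration argument. Writing $P(w) = \prod_{k\in[m]}(w-z_k)^{a_k}$, so that $p_\ba' = P$, we have $p_\ba(z_j) = \int_0^{z_j} P(w)\,dw$, and hence $(\bj_\ba)_{ij} = \frac{\partial}{\partial z_i}p_\ba(z_j) = P(z_j)\frac{\partial z_j}{\partial z_i} - a_i\int_0^{z_j}\frac{P(w)}{w-z_i}\,dw$. The boundary term vanishes because $z_j$ is a root of $P$ of multiplicity $a_j\ge 1$, so $(\bj_\ba)_{ij} = -a_i\,Q_i(z_j)$ where $Q_i(w) = \int_0^w \frac{P(t)}{t-z_i}\,dt$ is a polynomial (the integrand lies in $\C[t,\bz]$ since $a_i\ge 1$). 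Thus $\det\bj_\ba = \bigl(\prod_{i\in[m]}(-a_i)\bigr)\det\bigl[Q_i(z_l)\bigr]_{i,l\in[m]}$, and since each $Q_i(z_l)$ is homogeneous of degree $n$ in $\bz$, the determinant $\det\bj_\ba$ is homogeneous of degree $mn$.

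Next I would establish the divisibility. Fixing an index $j$ and substituting $w = z_j\tau$ in the $j$-th column gives $(\bj_\ba)_{ij} = -a_i z_j^{a_j+1}\int_0^1 \frac{(\tau-1)^{a_j}\prod_{k\ne j}(z_j\tau - z_k)^{a_k}}{z_j\tau - z_i}\,d\tau$: for $i\ne j$ the integrand is regular at $z_j=0$, while for $i=j$ the factor $z_j\tau - z_i = z_j(\tau-1)$ cancels one power of $z_j$ and leaves an integral that is a nonzero polynomial at $z_j=0$. Hence column $j$ equals $z_j^{a_j}$ times a vector whose $j$-th entry does not vanish at $z_j=0$, so $z_j^{a_j}\mid\det\bj_\ba$. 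Similarly, for a fixed pair $j\ne k$ I would replace column $k$ by (column $k$) $-$ (column $j$), whose $i$-th entry is $-a_i\int_{z_j}^{z_k}\frac{P(w)}{w-z_i}\,dw$; substituting $w = z_j + (z_k-z_j)\tau$ and using $P(w) = (w-z_j)^{a_j}(w-z_k)^{a_k}\prod_{l\ne j,k}(w-z_l)^{a_l}$ pulls out $(z_k-z_j)^{a_j+a_k}$, with the $i\in\{j,k\}$ entries again finite and not identically zero after the visible cancellation. Since $\C[\bz]$ is a UFD and the linear forms $z_j$ and $z_k - z_j$ are pairwise non-associate, we conclude $\prod_{j}z_j^{a_j}\prod_{j<k}(z_k-z_j)^{a_j+a_k}$ divides $\det\bj_\ba$; as this product has degree $\sum_j a_j + \sum_{j<k}(a_j+a_k) = n + (m-1)n = mn$, equal to the total degree, we get $\det\bj_\ba = C\prod_j z_j^{a_j}\prod_{j<k}(z_k-z_j)^{a_j+a_k}$ for a constant $C\in\C$. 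Rewriting $\prod_{j<k}(z_k-z_j)^{a_j+a_k}$ as $\pm\prod_{j\ne k}(z_k-z_j)^{a_j}$ and $\prod_j z_j^{a_j}$ as $\pm\prod_j(-z_j)^{a_j}$, it remains to show $C = \pm 1/\binom{n}{a_1,\ldots,a_m}$ with the correct sign.

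For the constant I would run an induction on $m$ via the degeneration $z_1\to 0$, and it is convenient to prove the statement for the family $M^{(b)}$ with entries $-a_i\int_0^{z_l}\frac{w^b P(w)}{w-z_i}\,dw$ (a fixed critical point of multiplicity $b\ge 0$ at the origin, with $\bj_\ba$ the case $b=0$); the same divisibility argument gives $\det M^{(b)} = C^{(b)}_\ba\prod_j z_j^{\,b+a_j}\prod_{j<k}(z_k-z_j)^{a_j+a_k}$. Expanding $\det M^{(b)}$ along its first column as $z_1\to 0$: the entries in rows $i\ne 1$ are $O(z_1^{\,b+a_1+1})$, the $(1,1)$-entry is $z_1^{\,b+a_1}$ times a polynomial whose value at $z_1=0$ is the Beta integral $-a_1\int_0^1\tau^b(\tau-1)^{a_1-1}\,d\tau = \binom{b+a_1}{b}^{-1}(-1)^{a_1}$ times $\prod_{k\ge 2}(-z_k)^{a_k}$, and the $(1,1)$-minor at $z_1=0$ is exactly $\det M^{(b+a_1)}_{(a_2,\ldots,a_m)}$ in the variables $z_2,\ldots,z_m$. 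Comparing coefficients of $z_1^{\,b+a_1}$ on the two expressions for $\det M^{(b)}$ yields the recursion $C^{(b)}_{(a_1,\ldots,a_m)} = (-1)^{\,n-b}\binom{b+a_1}{b}^{-1} C^{(b+a_1)}_{(a_2,\ldots,a_m)}$. Iterating down to the base case $m=1$, which is the single integral $-a_m\int_0^{z_m}w^b(w-z_m)^{a_m-1}\,dw = \binom{b+a_m}{b}^{-1}(-1)^{a_m}z_m^{\,b+a_m}$, and telescoping $\prod_{r=1}^m\binom{a_1+\cdots+a_r}{a_1+\cdots+a_{r-1}} = \binom{n}{a_1,\ldots,a_m}$, gives $C = C^{(0)}_\ba = \pm 1/\binom{n}{a_1,\ldots,a_m}$; a careful count of the signs (the accumulated power of $-1$ works out to $2\sum_j j a_j$, hence even) confirms agreement with the stated formula, and as a check the case $\ba = (1,\ldots,1)$ specializes to Theorem~\ref{thma:beardon-det}.

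The main obstacle I anticipate is making the degeneration step airtight: one must verify that the relevant $(1,1)$-minor is not identically zero, so that comparing leading coefficients in $z_1$ is legitimate, and, more tediously, track the signs exactly through the recursion and the telescoping product. I would also remark on the connection to Dyson's conjecture: an alternative to the induction extracts the coefficient of $z_1^n\cdots z_m^n$ from both sides of $\det\bj_\ba = C\prod_j z_j^{a_j}\prod_{j<k}(z_k-z_j)^{a_j+a_k}$. On the factored side this coefficient equals $C$ times the constant term of $\prod_{j\ne k}(1 - z_j/z_k)^{a_j}$, which Dyson's conjecture evaluates as $\binom{n}{a_1,\ldots,a_m}$; on the determinant side one computes the same coefficient directly, for instance via a Cauchy--Binet expansion of $\det[Q_i(z_l)]$ together with the evaluation of a generalized Vandermonde determinant, and equating the two again recovers $C = \pm 1/\binom{n}{a_1,\ldots,a_m}$.
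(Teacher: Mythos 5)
Your proposal is correct, and although it shares the paper's overall skeleton (each column of $\bj_\ba$ is divisible by $z_j^{a_j}$, each column difference by $(z_k-z_j)^{a_j+a_k}$, homogeneity plus unique factorization in $\C[z_1,\ldots,z_m]$ forces $\det\bj_\ba$ to be a constant multiple of the product, and the constant is found by induction on $m$), both key steps are implemented differently. For the divisibility you differentiate under the integral sign to write $(\bj_\ba)_{ij}=-a_i\int_0^{z_j}P(w)/(w-z_i)\,dw$ and then use the substitutions $w=z_j\tau$ and $w=z_j+(z_k-z_j)\tau$, whereas the paper derives the same two facts from its derivative-sequence machinery (Lemma~\ref{lem:two-anti} and the emphasized antiderivatives of Example~\ref{ex:gen-anti}); your degree count $mn$ is also the sharp one. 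For the constant, the paper works with lexicographic leading terms and strips off the top variable $z_m$ (Lemmas~\ref{lem:zm-exponents} and~\ref{lem:lt-det-ja}, with Proposition~\ref{prop:two-factor-comp} supplying the Beta-type evaluation), while you degenerate $z_1\to 0$; since that merges a critical point into the origin, you are led to prove the more general identity for your family $M^{(b)}$ with an extra critical point of multiplicity $b$ at $0$, and the recursion $C^{(b)}_{(a_1,\ldots,a_m)}=(-1)^{n-b}\binom{b+a_1}{b}^{-1}C^{(b+a_1)}_{(a_2,\ldots,a_m)}$ telescopes to $\binom{n}{a_1,\ldots,a_m}^{-1}$. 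Your route buys a slightly stronger determinant identity (allowing a fixed critical point at the origin) and avoids the monomial-order apparatus; the paper's route avoids the auxiliary parameter $b$ because in the leading term $(w-z_m)^{a_m}$ simply becomes the constant factor $(-z_m)^{a_m}$. I verified your recursion, base case, and signs: the accumulated exponent is $a_1+2a_2+\cdots+ma_m$, which is exactly the sign produced when the stated formula is rewritten over $j<k$, so your parity claim checks out. Finally, the obstacle you flag is not a genuine one: you are equating coefficients of $z_1^{b+a_1}$ in a polynomial identity, and after cancelling the manifestly nonzero polynomial in $z_2,\ldots,z_m$ common to both sides, the recursion between constants holds unconditionally; nonvanishing of every $C^{(b)}$ (equivalently of the relevant minors) then follows by induction from the explicit $m=1$ base case rather than needing to be checked in advance.
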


By the Inverse Function Theorem, the determinant result proven 
in \cite{beardon02} implies that the map 
$\theta\colon\mathbb{C}^n \to \mathbb{C}^n$ is a local 
homeomorphism at points with distinct nonzero entries. 
Theorem~\ref{thma:main-specific} allows us to extend this local 
homeomorphism result to points with nondistinct but nonzero 
entries. To do so, we stratify the points in $\mathbb{C}^n$ with 
nonzero entries according to which entries are equal, where the 
strata are in bijection with the partitions of $[n]$. In this 
setting, Theorem~\ref{thma:main-specific} demonstrates that while 
$\theta$ may or may not be a local homeomorphism at a generic point 
with nonzero entries, the restriction of $\theta$ to the 
corresponding stratum \emph{is} a local homeomorphism.

\begin{thma}\label{thma:main-verbose}
    Let $\lambda$ be a partition of $[n]$ and let $\C^\lambda$ be 
    the subspace of $\C^n$ consisting of all points
    $\bz = (z_1,\ldots,z_n)$ where $z_i = z_j$ if and only if
    $i$ and $j$ belong to the same block. Then $\theta$ restricts
    to a map $\theta_\lambda \colon \C^\lambda \to \overline{\C^\lambda}$,
    where $\overline{\C^\lambda}$ is the closure of $\C^\lambda$, and
    $\theta_\lambda$ is a local homeomorphism at $\bz \in \C^\lambda$
    if the entries of $\bz$ are nonzero.
\end{thma}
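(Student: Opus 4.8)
The plan is to reduce Theorem~\ref{thma:main-verbose} to Theorem~\ref{thma:main-specific} by means of a linear change of coordinates that identifies the stratum $\C^\lambda$ with an open subset of $\C^m$, where $m$ is the number of blocks of $\lambda$. Write the blocks of $\lambda$ as $B_1,\ldots,B_m$, set $a_j = |B_j|$ so that $\ba = (a_1,\ldots,a_m)$ satisfies $a_1+\cdots+a_m = n$, and for $i\in[n]$ let $k(i)\in[m]$ be the index of the block containing $i$. Let $\iota\colon\C^m\to\C^n$ be the linear map sending $\zeta = (\zeta_1,\ldots,\zeta_m)$ to the point $\bz$ with $z_i = \zeta_{k(i)}$. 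Its image is the linear subspace $L_\lambda = \{\bz : z_i = z_j \text{ whenever } k(i)=k(j)\}$, and $\iota$ is a homeomorphism onto $L_\lambda$. Removing from $L_\lambda$ the finitely many proper subspaces $\{z_i = z_j\}\cap L_\lambda$ with $k(i)\neq k(j)$ yields exactly $\C^\lambda$, so $\C^\lambda$ is open and dense in $L_\lambda$ and hence $\overline{\C^\lambda} = L_\lambda$. Consequently $\iota$ restricts to a homeomorphism from the open set $U = \{\zeta\in\C^m : \zeta_1,\ldots,\zeta_m \text{ are distinct}\}$ onto $\C^\lambda$, while $\iota$ itself is a homeomorphism $\C^m\to\overline{\C^\lambda}$.

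Next I would verify the identity $\theta\circ\iota = \iota\circ\theta_\ba$ on $U$. Given $\zeta\in U$ and $\bz = \iota(\zeta)$, collecting the factors of $(w-z_1)\cdots(w-z_n)$ according to blocks gives $(w-z_1)\cdots(w-z_n) = \prod_{j\in[m]}(w-\zeta_j)^{a_j}$, so $p_\bz = p_{\zeta,\ba}$, the canonical polynomial appearing in Theorem~\ref{thma:main-specific}. Therefore, for each $i\in[n]$, the $i$-th coordinate of $\theta(\bz)$ equals $p_\bz(z_i) = p_{\zeta,\ba}(\zeta_{k(i)}) = \theta_{\ba,k(i)}(\zeta)$, which is the $i$-th coordinate of $\iota(\theta_\ba(\zeta))$. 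In particular $\theta(\C^\lambda) = \iota(\theta_\ba(U)) \subseteq \iota(\C^m) = \overline{\C^\lambda}$, so $\theta$ indeed restricts to a map $\theta_\lambda\colon\C^\lambda\to\overline{\C^\lambda}$, and under the two homeomorphisms above $\theta_\lambda$ corresponds to $\theta_\ba|_U\colon U\to\C^m$.

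Finally I would combine Theorem~\ref{thma:main-specific} with the Inverse Function Theorem. Fix $\bz\in\C^\lambda$ with nonzero entries and let $\zeta = \iota^{-1}(\bz)\in U$; then $\zeta_1,\ldots,\zeta_m$ are distinct (since $\zeta\in U$) and nonzero (since the entries of $\bz$ are). By Theorem~\ref{thma:main-specific}, $\det\bj_\ba(\zeta)\neq 0$, so the holomorphic Inverse Function Theorem shows that $\theta_\ba$ is a local homeomorphism at $\zeta$: there is an open set $V$ with $\zeta\in V\subseteq U$ on which $\theta_\ba$ restricts to a homeomorphism onto an open subset of $\C^m$. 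Transporting this through the homeomorphisms $\iota\colon U\to\C^\lambda$ and $\iota\colon\C^m\to\overline{\C^\lambda}$ shows that $\theta_\lambda$ restricts to a homeomorphism from the open neighborhood $\iota(V)$ of $\bz$ onto an open subset of $\overline{\C^\lambda}$, which is precisely the assertion that $\theta_\lambda$ is a local homeomorphism at $\bz$.

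Most of this argument is bookkeeping, and I expect the only genuine content to be the identity $p_\bz = p_{\zeta,\ba}$ together with the determinant formula imported from Theorem~\ref{thma:main-specific}. The point most prone to error is keeping track of the fact that $\C^\lambda$ is merely open and dense in its closure $L_\lambda = \overline{\C^\lambda}$, so that $\theta_\lambda$ must be regarded as a map into the $m$-dimensional space $\overline{\C^\lambda}$ rather than into $\C^\lambda$, since $\theta$ need not map the stratum $\C^\lambda$ into itself.
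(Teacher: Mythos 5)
Your proposal is correct and follows essentially the same route as the paper: the map $\iota$ you construct is exactly the inverse of the paper's identification $\phi\colon \overline{\C^\lambda}\to\C^\ell$, and both arguments conjugate $\theta_\lambda$ into $\theta_\ba$, then apply Theorem~\ref{thma:main-specific} and the Inverse Function Theorem. The only difference is that you spell out the bookkeeping (that $\overline{\C^\lambda}$ is the linear subspace $L_\lambda$ and that $p_\bz = p_{\zeta,\ba}$) which the paper handles in its stratification definition.
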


These theorems have direct connections to the braid group as the 
fundamental group of the space of complex polynomials with distinct 
roots. The details will be given in upcoming work by the authors.
The article is organized into five sections. Section~1 provides 
integration formulas for products of polynomials. Section~2
discusses monomial orders for multivariable polynomials and the 
well-known Vandermonde determinant, which serves as 
a guide for our proofs of Theorems~A and B in Section~3. We then 
describe a connection between our determinant result and the 
multinomial identity known as Dyson's conjecture in Section~4 
before finally proving Theorem~C in Section~5.

\section{Integrating Products}
\label{sec:antiderivatives}

We begin with a simple way to antidifferentiate products of polynomials. 

\begin{defn}[Derivative sequences]
	Let $R$ be a commutative ring. A sequence of polynomials $(f_\ast) = (f_0,f_1,f_2,\ldots)$ in $R[z]$ is a 	\emph{derivative sequence} if $f_n(z)$ has degree $n$ and $\frac{d}{dz} f_n(z) = f_{n-1}(z)$ for each positive integer $n$. For convenience, we define $f_n(z)$ to be the zero polynomial if $n$ is a negative integer.
\end{defn}

Derivative sequences are similar to \emph{Appell sequences},
which instead require the condition 
$\frac{d}{dz} f_n(z) = n\cdot f_{n-1}(z)$. The tools presented 
in this section appear similar to others in this related context; 
see \cite{lee11} and \cite{liupanzhang14} for examples.

\begin{example}[A special derivative sequence]\label{ex:derivative-sequence}
	If we fix some $z_0\in \C$ and define $f_n(z) = \frac{1}{n!}(z-z_0)^n$, 
	then $(f_\ast)$ is a derivative sequence in $\C[z]$ with the unusual 
	property that $f_n(z)$ is a factor of $f_m(z)$ for all $m \geq n$.
\end{example}

Every polynomial belongs to a derivative sequence, and the product of 
two polynomials has an antiderivative which is easily expressed using 
the derivative sequences to which they belong.

\begin{example}[Derivative sequences and antiderivatives]\label{ex:antiderivative}
	Let $f_5$ and $g_3$ be polynomials of degree $5$ and $3$ 
	respectively and let $(f_\ast)$ and $(g_\ast)$ be derivative 
	sequences that contain them.  We can use these  derivative 
	sequences to produce two antiderivatives for the product 
	$f_5\cdot g_3$.  Consider 
	$(f_6\cdot g_3) - (f_7\cdot g_2) + (f_8\cdot g_1) - (f_9\cdot g_0)$. 
	To see this is an antiderivative, simply expand 
	$\frac{d}{dz} [(f_6\cdot g_3) - (f_7\cdot g_2) 
	+ (f_8\cdot g_1) - (f_9\cdot g_0)]$
	to 
	$(f_5\cdot g_3 + f_6\cdot g_2) - (f_6\cdot g_2 + f_7\cdot g_1) 
	+ (f_7\cdot g_1 + f_8\cdot g_0) - (f_8\cdot g_0 + f_9\cdot g_{-1})$ 
	which simplifies to 
	$f_5\cdot g_3 - f_9\cdot g_{-1} = f_5\cdot g_3$ since $g_{-1}$ 
	is the zero polynomial. Switching the role of $f$ and $g$, 
	we see that 
	$(f_5\cdot g_4) - (f_4\cdot g_5) + (f_3\cdot g_6) 
	- (f_2\cdot g_7) + (f_1\cdot g_8) - (f_0\cdot g_9)$ 
	is another antiderivative of $f_5\cdot g_3$.  
\end{example}

The antiderivatives of a product listed in Example~\ref{ex:antiderivative} 
are just the result of iterated integration by parts.  The derivative 
sequences merely predetermine the antiderivatives that one uses.

\begin{lem}[An antiderivative formula]\label{lem:two-anti}
	If $(f_\ast)$ and $(g_\ast)$ are derivative sequences, then
	$r(z) = \sum_{i=0}^b (-1)^i (f_{a+1+i}(z)\cdot g_{b-i}(z))$
	is an antiderivatve of $f_a(z)\cdot g_b(z)$.  Note that if 
	$(f_\ast)$ is the derivative sequence with 
	$f_n(z) = \frac{1}{n!}(z-z_0)^n$, then $(z-z_0)^{a+1}$ is a 
	factor of the polynomial $r(z)$.
\end{lem}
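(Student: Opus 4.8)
The plan is to verify the claim by differentiating $r(z)$ directly and watching the terms cancel in pairs, exactly as in the worked Example~\ref{ex:antiderivative}. Applying the product rule to the general summand $(-1)^i f_{a+1+i}(z)\, g_{b-i}(z)$ and using the defining property of derivative sequences, namely $\frac{d}{dz} f_{a+1+i}(z) = f_{a+i}(z)$ and $\frac{d}{dz} g_{b-i}(z) = g_{b-i-1}(z)$, I would write
\[
    r'(z) = \sum_{i=0}^{b} (-1)^i f_{a+i}(z)\, g_{b-i}(z)
          + \sum_{i=0}^{b} (-1)^i f_{a+1+i}(z)\, g_{b-i-1}(z).
\]
In the first sum I would peel off the $i=0$ term, which is precisely $f_a(z)\, g_b(z)$, the desired value. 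In the second sum I would reindex by $j = i+1$, turning it into $-\sum_{j=1}^{b+1} (-1)^{j} f_{a+j}(z)\, g_{b-j}(z)$; the $j = b+1$ term here involves $g_{-1}$, which is the zero polynomial by convention, so it drops out. What remains of the first sum (indices $1 \le i \le b$) then cancels term-by-term against what remains of the reindexed second sum, leaving $r'(z) = f_a(z)\, g_b(z)$.

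For the divisibility statement, the plan is immediate: when $f_n(z) = \frac{1}{n!}(z-z_0)^n$, every summand of $r(z)$ carries the factor $f_{a+1+i}(z)$, and since $i \ge 0$ each such factor is divisible by $(z-z_0)^{a+1}$; hence so is the whole sum $r(z)$.

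The only thing requiring any care — and it is bookkeeping rather than a genuine obstacle — is lining up the index shift in the second sum and confirming that the lone boundary term genuinely vanishes by the zero-polynomial convention for negative indices. One could instead package the computation as an induction on $b$ via integration by parts, but the direct telescoping argument above is shorter and mirrors the intuition already supplied by Example~\ref{ex:antiderivative}, so that is the route I would take.
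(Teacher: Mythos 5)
Your proposal is correct and follows essentially the same route as the paper's proof: apply the product rule, split into two sums, reindex the second so the terms telescope, and observe that the lone surviving boundary term vanishes because $g_{-1}$ is the zero polynomial, with the divisibility claim following since every summand carries the factor $f_{a+1+i}(z)$, which is divisible by $(z-z_0)^{a+1}$. No changes are needed.
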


\begin{proof}
    Computing the derivative is straightforward. We apply the product rule, split into
    two summands, and reindex; the result is that all terms cancel except the first,
    which is what we want, and the last, which is zero. In symbols:
    \[
        \frac{d}{dz} \left(	\sum_{i=0}^b (-1)^i (f_{a+1+i}\cdot g_{b-i}) \right)
		= \sum_{i=0}^b (-1)^i (f_{a+i}\cdot g_{b-i} + f_{a+1+i}\cdot g_{b-i-1})
    \]
	\begin{align*}
		&= \left( \sum_{i=0}^b (-1)^i (f_{a+i}\cdot g_{b-i}) \right) +
		\left( \sum_{i=0}^{b} (-1)^i (f_{a+i+1}\cdot g_{b-i-1}) \right) \\
		&= \left( \sum_{i=0}^b (-1)^i (f_{a+i}\cdot g_{b-i}) \right) -
		\left( \sum_{i=1}^{b+1} (-1)^i (f_{a+i}\cdot g_{b-i}) \right) \\
		&= f_a\cdot g_b - f_{a+b+1}\cdot g_{-1} \\ 
		&= f_a\cdot g_b
	\end{align*}
	The final assertion follows from the fact that 
	$(z-z_0)^{a+1}$ is a factor of $f_j(z)$ for all $j> a$ in this special case.
\end{proof}

For later use we include a specific application of Lemma~\ref{lem:two-anti}.

\begin{prop}\label{prop:two-factor-comp}
    Let $z_0$ be a complex variable and let $a$ and $b$ be positive 
    integers with $a+b=n$. If $p(z) = \int_0^z w^a (w-z_0)^b dw$, 
    then $p(z_0) = (-1)^b\frac{a!\, b!}{(n+1)!} z_0^{n+1}$ and 
    $\frac{\partial}{\partial z_0}(p(z_0)) 
    = (-1)^b\frac{a!\, b!}{n!} z_0^n$.
\end{prop}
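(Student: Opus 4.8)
The plan is to apply Lemma~\ref{lem:two-anti} with two explicit derivative sequences and then exploit the fact that all but one term of the resulting antiderivative vanishes once $z$ is set equal to $z_0$. I would take $(f_\ast)$ to be the special derivative sequence with $f_j(z) = \tfrac{1}{j!}z^j$ (the case $z_0 = 0$ of Example~\ref{ex:derivative-sequence}) and take $(g_\ast)$ to be the derivative sequence with $g_j(z) = \tfrac{1}{j!}(z-z_0)^j$, viewed over the ring $\C[z_0]$. Then $w^a(w-z_0)^b = a!\,b!\,f_a(w)g_b(w)$, so Lemma~\ref{lem:two-anti} produces the antiderivative
\[
    r(z) = \sum_{i=0}^b (-1)^i f_{a+1+i}(z)\,g_{b-i}(z)
\]
of $f_a(z)g_b(z)$. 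Since $z^{a+1}$ divides $f_{a+1+i}(z)$ for every $i \geq 0$, the constant term of $r(z)$ is zero, and hence $p(z) = a!\,b!\,r(z)$ with no additive correction.

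Next I would evaluate at $z = z_0$. Each factor $g_{b-i}(z_0) = \tfrac{1}{(b-i)!}(z_0-z_0)^{b-i}$ is zero unless $b - i = 0$, so the sum collapses to its final ($i = b$) term, giving $r(z_0) = (-1)^b f_{a+b+1}(z_0)\,g_0(z_0) = (-1)^b \tfrac{z_0^{\,n+1}}{(n+1)!}$. Multiplying by $a!\,b!$ yields $p(z_0) = (-1)^b \tfrac{a!\,b!}{(n+1)!}z_0^{\,n+1}$, which is the first claimed identity. The second identity then follows immediately: this is an equality of polynomials in the single variable $z_0$, so $\tfrac{\partial}{\partial z_0}(p(z_0))$ is just the ordinary derivative of the right-hand side, namely $(-1)^b \tfrac{a!\,b!}{(n+1)!}(n+1)z_0^{\,n} = (-1)^b \tfrac{a!\,b!}{n!}z_0^{\,n}$.

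There is little genuine difficulty here; the argument is short once Lemma~\ref{lem:two-anti} is in hand. The only points requiring care are the bookkeeping with factorials and signs, confirming that the constant term of $r(z)$ vanishes so that no extra term enters $p(z) = a!\,b!\,r(z)$, and noting that — since $p(z_0)$ is an honest polynomial in $z_0$ alone — the partial derivative appearing in the statement coincides with the total derivative, so no appeal to differentiation under the integral sign is needed.
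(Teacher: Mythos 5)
Your proposal is correct and follows essentially the same route as the paper: both apply Lemma~\ref{lem:two-anti} to the derivative sequences $f_i(z) = \tfrac{1}{i!}z^i$ and $g_j(z) = \tfrac{1}{j!}(z-z_0)^j$, note that the resulting antiderivative has zero constant term, and collapse the sum at $z = z_0$ to the single $i = b$ term. Your explicit remark that the partial derivative is just the ordinary derivative of a polynomial in $z_0$ fills in the (trivial) second claim, which the paper leaves implicit.
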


\begin{proof}
    For any nonnegative integers $i$ and $j$, define the polynomials
    $f_i(z) = \frac{1}{i!}z^i$ and $g_j(z) = \frac{1}{j!}(z-z_0)^j$. Then $(f_\ast)$ 
    and $(g_\ast)$ are derivative sequences and by Lemma~\ref{lem:two-anti},
    we know that $f_{a+1}\cdot g_b - f_{a+2}\cdot g_{b-1} + \cdots + 
    (-1)^b f_{n+1}\cdot g_0$ is an antiderivative for the product 
    $f_a\cdot g_b$. Notice that $p(z) = a!\, b!\int_0^z (f_a(w)\cdot g_b(w))\ dw$
    and $f_i(0) = 0$ for all $i > 0$, so we have that
    \[
        p(z) = a!\, b!\big(f_{a+1}(z)\cdot g_b(z) - f_{a+2}(z)\cdot g_{b-1}(z) + 
        \cdots + (-1)^b f_{n+1}(z)\cdot g_0(z)\big).
    \]
    Finally, we note that $g_j(z_0) = 0$ for all $j>0$ while $g_0(z_0) = 1$, so 
    \[
        p(z_0) = a!\, b! (-1)^b (f_{n+1}(z_0)\cdot g_0(z_0)) 
        = (-1)^b \frac{a!\, b!}{(n+1)!} z_0^{n+1}
    \]
    as desired.
\end{proof}

The following example outlines our primary motivation for this section.

\begin{example}[Emphasizing a factor]\label{ex:gen-anti}
    Let $R$ be the polynomial ring $\C[z_1,\ldots,z_m]$ and
    let $\ba = (a_1,\ldots,a_m)$ be an $m$-tuple of positive 
    integers with $n = a_1+\cdots+a_m$. Define the
    polynomial $\tilde{p}_\ba(z) \in R[z]$ by
	\[
	    \tilde{p}_\ba(z) = \int_0^z 
		\frac{(w-z_1)^{a_1}}{a_1!} \cdots \frac{(w-z_m)^{a_m}}{a_m!}\ dw.
	\] 
	Notice that $(a_1!\cdots a_m!)\tilde{p}_\ba(z)$ is equal to the
	polynomial $p_\ba(z)$ defined in the introduction.
	For any choice of $j\in [m]$, define two derivative sequences
	$(f_\ast)$ and $(g_\ast)$; define $(f_\ast)$ explicitly by 
	$f_k(w) = \frac{1}{k!}(w-z_j)^k$ for each nonnegative integer $k$, and 
	let $(g_\ast)$ be any derivative sequence containing the term
	\[g_{n-a_j}(w) = \frac{(w-z_1)^{a_1}}{a_1!}
	\cdots\frac{(w-z_{j-1})^{a_{j-1}}}{a_{j-1}!}
	\frac{(w-z_{j+1})^{a_{j+1}}}{a_{j+1}!}
	\cdots \frac{(w-z_m)^{a_m}}{a_m!}.\]
	We write $\tilde{p}_{\ba,j}$ to denote the antiderivative of the 
	product $f_{a_j}\cdot g_{n-a_j}$ given by 
	Lemma~\ref{lem:two-anti}. Then 
	$\tilde{p}_{\ba}(z)= \tilde{p}_{\ba,j}(z) - \tilde{p}_{\ba,j}(0)$
	and we say that the index $j$ has been ``emphasized''. 
	We make several observations about these expressions:
    \begin{enumerate}
        \item $\tilde{p}_{\ba,j}(z_j) = 0$
        \item $\tilde{p}_{\ba,j}(0)$ is divisible by $z_j^{a_j+1}$
        \item $\tilde{p}_{\ba,j}(z_i)$ is divisible by $(z_i - z_j)^{a_i + a_j + 1}$
    \end{enumerate}
    To see this, first recall by Lemma~\ref{lem:two-anti} that $\tilde{p}_{\ba,j}(w)$ 
    is divisible by $(w - z_j)^{a_j + 1}$. From here, we can substitute $w = z_j$
    or $w = 0$ to prove (1) or (2), respectively. As for (3), notice that since
    $(w-z_i)^{a_i}$ is a factor of $g_{n-a_j}$, we know that $g_{n-a_j-\ell}(z_i) = 0$
    for all $\ell < a_i$. Therefore, the nonzero terms of $\tilde{p}_{\ba,j}(z_i)$
    are those of the form 
    $(-1)^\ell(f_{a_j + \ell + 1}(z_i) \cdot g_{n-a_j - \ell}(z_i))$
    where $\ell \geq a_i$, and in this setting, each $f_{a_j+\ell+1}(z)$ 
    is divisible by $(z_i-z_j)^{a_i+a_j+1}$, so we are done.
\end{example}

\begin{rem}[An explicit antiderivative]
    One can iteratively apply Lemma~\ref{lem:two-anti} to obtain an explicit
    expression for $\tilde{p}_{\ba,j}$. Concretely, 
    let $\ba = (a_1,\ldots,a_m)$ be an $m$-tuple of positive integers
    and fix $j\in [m]$. It can then be shown that 
    \[
        \tilde{p}_{\ba,j}(z) =
		\sum_{i \geq 0}
		(-1)^i 	\frac{(z-z_j)^{a_j+i+1}}{(a_j+i+1)!} 
		\left(\sum_{\substack{b_1+\cdots+b_m=i\\b_j=0}} 
	    \binom{i}{b_1,\ldots,b_m}
		\prod_{\substack{\ell \in [m]\\ \ell\neq j}} \frac{(z-z_\ell)^{a_\ell-b_\ell}}{(a_\ell-b_\ell)!}
		\right)
	\]
	although we will not require such explicit detail in this article.
\end{rem}

\section{Ordering Monomials and the Vandermonde Determinant}
\label{sec:monomials}

In this section we review how one computes the Vandermonde determinant 
as a warmup to the proofs of Theorems~\ref{thma:beardon-det} 
and \ref{thma:main-specific} in Section~\ref{sec:determinants}.  
Our proof uses the idea of a monomial order borrowed from the 
standard construction of a Gr\"obner basis. 

\begin{defn}[Monomial order]\label{def:monomial-order}
    Fix a positive integer $m$ and a polynomial
    ring $R = \mathbb{C}[z_1,\ldots,z_m]$. Ordering the variables
    $z_1 < z_2 < \cdots < z_m$ then yields a lexicographic order 
    on the monomials of $R$. One first compares the degree of $z_m$ 
    in the two terms and orders them accordingly.  If they agree, 
    one compares the degrees of $z_{m-1}$ and so on.  For any 
    polynomial $f \in R$, the \emph{leading term} $\lt(f)$ is the 
    summand of $f$ containing the largest monomial in this order 
    and the coefficient of the leading term is the \emph{leading coefficient}.
\end{defn}

\begin{example}
    Let $R = \mathbb{C}[z_1,z_2,z_3]$ and define $f\in R$ by 
    $f = 7z_1^9z_2^6z_3^3 -  6z_1z_2^2z_3^4$. Then 
    $z_1^9z_2^6z_3^3 < z_1z_2^2z_3^4$ in the lexicographic order
    on $R$, so $\lt(f) = -6z_1z_2^2z_3^4$ and the leading coefficient is $-6$.
\end{example}

The notion of a leading term is useful in the evaluation of the
Vandermonde determinant, traditionally introduced to show that
any $n+1$ distinct points on the rational normal curve
are in general position.
Recall that if $\mathbf{M}$ is an $n\times n$ matrix, then the 
\emph{Leibniz determinant formula} computes the determinant of $\mathbf{M}$ 
as a sum over permutations $\sigma$ in the symmetric group $\sym_n$:
\[
    \det \mathbf{M} = \sum_{\sigma\in \sym_n} \sgn(\sigma) 
    \prod_{i\in [n]} (\mathbf{M})_{\sigma(i),i}
\] 

\begin{thm}[Vandermonde determinant]\label{thm:vandermonde}
If $\mathbf{V}$ is the Vandermonde matrix
\[ \mathbf{V} = \begin{pmatrix}
	1 & 1 & 1 & \cdots & 1 & 1\\
	z_1 & z_2 & z_3 & \cdots & z_{n-1} & z_n \\
	z_1^2 & z_2^2 & z_3^2 & \cdots & z_{n-1}^2 & z_n^2 \\
	\vdots & \vdots & \vdots & \ddots & \vdots & \vdots \\
	z_1^{n-2} & z_2^{n-2} & z_3^{n-2} & \cdots & z_{n-1}^{n-2} & z_n^{n-2} \\
	z_1^{n-1} & z_2^{n-1} & z_3^{n-1} & \cdots & z_{n-1}^{n-1} & z_n^{n-1} \\
	\end{pmatrix}\]
with $(i,j)$-entry $z_j^{i-1}$, then its determinant is 
	\[
	\det \mathbf{V}= \prod_{\substack{j,k \in[n]\\ j<k}} (z_k - z_j).
	\]
\end{thm}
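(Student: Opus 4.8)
The plan is to use the monomial-order machinery introduced in this section, mimicking exactly the argument the authors seem to be building toward for Theorems~\ref{thma:beardon-det} and~\ref{thma:main-specific}. Expand $\det \mathbf{V}$ via the Leibniz formula: each permutation $\sigma \in \sym_n$ contributes $\sgn(\sigma)\prod_{j\in[n]} z_j^{\sigma(j)-1}$, a single monomial (up to sign). The first step is to identify which of these monomials is the leading term under the lexicographic order with $z_1 < z_2 < \cdots < z_n$. Since the order first maximizes the exponent of $z_n$, then $z_{n-1}$, and so on, the unique monomial-maximizing permutation is the one sending $n\mapsto n$, $n-1\mapsto n-1$, \ldots, i.e.\ the identity, giving leading term $z_1^0 z_2^1 \cdots z_n^{n-1}$ with coefficient $\sgn(\mathrm{id}) = 1$. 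A small subtlety to address: distinct permutations give distinct monomials here (the exponent tuple $(\sigma(1)-1,\ldots,\sigma(n)-1)$ determines $\sigma$), so there is no cancellation to worry about and the leading term of the whole determinant is genuinely $z_2 z_3^2 \cdots z_n^{n-1}$.

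Next I would compute the leading term of the claimed product $\prod_{j<k}(z_k - z_j)$ independently. For each factor $(z_k - z_j)$ with $j < k$, the variable ordering puts $z_k > z_j$, so $\lt(z_k - z_j) = z_k$. Since the leading term of a product of polynomials is the product of the leading terms (lexicographic orders are multiplicative), $\lt\!\big(\prod_{j<k}(z_k-z_j)\big) = \prod_{j<k} z_k$. For a fixed $k$, the index $j$ ranges over $\{1,\ldots,k-1\}$, contributing $z_k^{k-1}$; multiplying over all $k\in[n]$ gives $z_1^0 z_2^1 z_3^2 \cdots z_n^{n-1}$ with coefficient $1$. Thus $\det \mathbf{V}$ and $\prod_{j<k}(z_k-z_j)$ have the same leading term.

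To finish, I would promote this agreement of leading terms into full equality using a degree-and-divisibility argument. Both sides are polynomials in $\mathbb{C}[z_1,\ldots,z_n]$; the product side has total degree $\binom{n}{2}$, and every monomial appearing in the Leibniz expansion of $\det \mathbf{V}$ also has total degree $0 + 1 + \cdots + (n-1) = \binom{n}{2}$, so both sides are homogeneous of the same degree. Moreover, $\det \mathbf{V}$ vanishes whenever $z_j = z_k$ for some $j \neq k$ (two columns of $\mathbf{V}$ coincide), so each irreducible factor $(z_k - z_j)$ divides $\det \mathbf{V}$; since these factors are pairwise non-associate, their product divides $\det\mathbf{V}$, and a degree count forces $\det \mathbf{V} = c\prod_{j<k}(z_k-z_j)$ for a constant $c$. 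Comparing leading coefficients (both equal $1$) gives $c = 1$.

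The main obstacle is essentially bookkeeping rather than a deep difficulty: one must argue carefully that the identity permutation uniquely maximizes the lexicographic monomial order among all $\sigma\in\sym_n$ and that no two permutations contribute the same monomial. Intuitively this is clear — greedily maximizing $z_n$'s exponent forces $\sigma^{-1}(n)=n$, then maximizing $z_{n-1}$'s exponent among the rest forces $\sigma^{-1}(n-1)=n-1$, and so on — but making this induction precise is the one place the proof needs genuine (if elementary) care. Everything else follows the template that Section~\ref{sec:determinants} will apply in the harder cases: isolate the leading term combinatorially, then upgrade to equality via divisibility and homogeneity.
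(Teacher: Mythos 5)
Your proof is correct and follows essentially the same route as the paper: establish that $\det \mathbf{V}$ is homogeneous of degree $\binom{n}{2}$ and divisible by each non-associate prime $(z_k-z_j)$, conclude $\det\mathbf{V}=c\prod_{j<k}(z_k-z_j)$, and pin down $c=1$ by comparing lexicographic leading terms. The only differences are cosmetic: you get divisibility from the factor theorem (equal columns) rather than column subtraction, and you identify the leading term by a greedy no-cancellation argument on the Leibniz monomials rather than by induction on the smaller Vandermonde matrix.
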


\begin{proof}
    We view the entries as monomials in the variables $z_1,\ldots,z_n$ 
    and we let $D(\bz) \in \C[z_1,\ldots,z_n]$ denote the multivariable 
    polynomial on the righthand side of the claimed formula. 
    Since the entries in the $i$-th row of $\mathbf{V}$ are homogeneous
    polynomials of degree $i-1$, we know that each
    summand in the Leibniz formula for $\det \mathbf{V}$
    is homogeneous of degree $0+1+\cdots +(n-1)$ and thus $\det \mathbf{V}$
    is a homogeneous polynomial of degree at most $\binom{n}{2}$.
    Moreover, the determinant is unchanged if the $j$-th column is 
    subtracted from the $k$-th column, and since every entry in the 
    new $k$-th column is divisible by $z_k-z_j$, this expression 
    divides the determinant. Since the linear factors that arise as 
    $j$ and $k$ are varied represent distinct non-associate prime 
    elements of the polynomial ring $\mathbb{C}[z_1,\ldots,z_n]$, 
    we know that their product, $D(\bz)$, divides $\det \mathbf{V}$ 
    as well. Because $D(\bz)$ is also homogeneous with degree 
    $\binom{n}{2}$, we know that for some constant $C$, 
    $\det \mathbf{V}$ is of the form 
    $\det \mathbf{V} = C \cdot D(\bz)$. 
    
    To determine the value of $C$, we lexicographically order the 
    monomials as in Definition~\ref{def:monomial-order}
    and then compare the leading terms of $\det \mathbf{V}$ and $D(\bz)$.
    In the latter case, the leading term is $\prod_{k\in [n]} z_k^{k-1}$,
    obtained by always choosing $z_k$ instead of $z_j$ when expanding the
    product. Similarly, the leading term of $\det \mathbf{V}$ appears
    in only one summand of the Leibniz formula,
    corresponding to when $\sigma$ is the identity permutation. To 
    see this, first notice that $(\mathbf{V})_{nn}$ is the only 
    entry which contributes a large enough power of $z_n$ to the 
    determinant, so we must have $\sigma(n) = n$. We then induct by 
    observing that the $(n-1)\times (n-1)$ submatrix of $\mathbf{V}$ 
    obtained by deleting the last row and column is simply a smaller 
    Vandermonde	matrix. Therefore, $\lt(\det \mathbf{V})$
    is also $\prod_{k\in [n]} z_k^{k-1}$, the product of the diagonal
    entries. So $C=1$ and we are done.    
\end{proof}

In more complicated cases, we can use the Leibniz formula
to compute the leading term of a determinant.

\begin{prop}[Leading terms of determinants]\label{prop:lt-det}
    Let $R = \mathbb{C}[z_1,\ldots,z_m]$ be lexicographically ordered and
    let $\mathbf{M}$ be a $k\times k$ matrix with entries in $R$. If
    $\mathbf{M}^\prime$ is the $k\times k$ matrix with $(i,j)$-entry
    $(\mathbf{M}^\prime)_{ij} = \lt((\mathbf{M})_{ij})$, then 
    $\lt(\det \mathbf{M}) = \lt(\det \mathbf{M}^\prime)$.
\end{prop}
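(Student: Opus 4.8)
The plan is to analyze the Leibniz expansion of $\det \mathbf{M}$ term by term and argue that exactly one term contributes the leading monomial, namely the term built from the leading terms of a carefully chosen permutation's entries. First I would observe that by the Leibniz formula,
\[
    \det \mathbf{M} = \sum_{\sigma \in \sym_k} \sgn(\sigma) \prod_{i\in[k]} (\mathbf{M})_{\sigma(i),i},
\]
and that each product $\prod_i (\mathbf{M})_{\sigma(i),i}$ is itself a polynomial whose leading term is $\prod_i \lt((\mathbf{M})_{\sigma(i),i})$, since the leading term of a product of polynomials is the product of the leading terms (the lexicographic order is multiplicative on monomials, so no cancellation can occur across the factors of a single product). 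Thus $\lt\!\left(\prod_i (\mathbf{M})_{\sigma(i),i}\right) = \sgn(\sigma)\prod_i (\mathbf{M}^\prime)_{\sigma(i),i}$ up to the sign, which is exactly the corresponding summand in the Leibniz formula for $\det \mathbf{M}^\prime$.

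The subtlety — and the main obstacle — is that a sum of polynomials need not have leading term equal to the largest of the individual leading terms, because the largest leading terms among different summands might cancel. So the real content is: let $\mu$ be the largest monomial appearing among all the products $\prod_i (\mathbf{M}^\prime)_{\sigma(i),i}$ as $\sigma$ ranges over $\sym_k$, and let $S \subseteq \sym_k$ be the set of permutations achieving this maximum. I must show that the coefficient of $\mu$ in $\det \mathbf{M}^\prime$ is nonzero, i.e. $\sum_{\sigma \in S} \sgn(\sigma) c_\sigma \neq 0$ where $c_\sigma$ is the product of the leading coefficients of the relevant entries. This is precisely the statement that $\lt(\det \mathbf{M}^\prime)$ has monomial part $\mu$ — but this requires justification, since a priori $\det \mathbf{M}^\prime$ could vanish or have its $\mu$-coefficient cancel.

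To handle this cleanly, I would restructure the argument to avoid claiming anything false: the honest statement is that $\lt(\det \mathbf{M}) = \lt(\det \mathbf{M}^\prime)$ as an \emph{identity of leading terms}, where both sides are computed in the same way. Write $\det \mathbf{M}^\prime = \sum_\sigma \sgn(\sigma) \prod_i (\mathbf{M}^\prime)_{\sigma(i),i}$, a polynomial each of whose summands is a single monomial (times a scalar). Its leading term, whatever it is, is obtained by collecting all summands whose monomial equals the lex-largest monomial $\mu$ occurring, and it is $\lt(\det \mathbf{M}^\prime)$ by definition (which is $0$ if these cancel). Now for $\det \mathbf{M}$: each summand $\sgn(\sigma)\prod_i (\mathbf{M})_{\sigma(i),i}$ has leading term $\sgn(\sigma)\prod_i (\mathbf{M}^\prime)_{\sigma(i),i}$, a scalar times $\mu_\sigma$ for some monomial $\mu_\sigma$, and every other monomial appearing in that summand is strictly smaller than $\mu_\sigma$ in lex order. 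Therefore the coefficient of $\mu$ in $\det \mathbf{M}$ receives contributions only from those $\sigma$ with $\mu_\sigma = \mu$ (smaller monomials from other terms cannot reach $\mu$, and $\mu_\sigma$ is the \emph{largest} monomial in the $\sigma$-summand so no summand contributes to any monomial larger than its own $\mu_\sigma$, hence none contributes to a monomial larger than $\mu$). Consequently no monomial strictly larger than $\mu$ appears in $\det \mathbf{M}$, and the coefficient of $\mu$ itself is $\sum_{\sigma:\,\mu_\sigma = \mu} \sgn(\sigma)\,c_\sigma$, which is identical to the coefficient of $\mu$ in $\det \mathbf{M}^\prime$. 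Thus $\lt(\det \mathbf{M}) = \lt(\det \mathbf{M}^\prime)$, with the caveat that if this common coefficient is zero then both leading terms are the same lower-order term and one repeats the argument; but in our applications the matrices $\mathbf{M}$ will be chosen so that this does not happen, and in any case the equality $\lt(\det \mathbf{M}) = \lt(\det \mathbf{M}^\prime)$ holds as stated. The key step I expect to dwell on is the careful bookkeeping that no summand of the Leibniz expansion of $\det \mathbf{M}$ contributes a monomial exceeding $\mu$, which rests entirely on the multiplicativity of the lex order.
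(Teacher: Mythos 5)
Your route is the same as the paper's---expand both determinants by the Leibniz formula and use that the leading term of a product is the product of the leading terms---but you do the bookkeeping more carefully than the paper, whose proof is a one-liner that silently ignores the cancellation issue you correctly isolate. The valid content of your argument is exactly this: no monomial strictly larger than $\mu$ (the lex-largest monomial among the products $\prod_i(\mathbf{M}^\prime)_{\sigma(i),i}$) can occur in $\det\mathbf{M}$, and the coefficient of $\mu$ in $\det\mathbf{M}$ equals its coefficient in $\det\mathbf{M}^\prime$. Hence $\lt(\det\mathbf{M})=\lt(\det\mathbf{M}^\prime)$ \emph{provided} that common coefficient is nonzero.

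The flaw is your closing clause: it is not true that if the coefficient of $\mu$ cancels then ``both leading terms are the same lower-order term,'' nor does ``the equality hold as stated in any case.'' Once the top coefficient cancels, $\det\mathbf{M}$ retains cross terms built from the non-leading parts of the entries, which $\det\mathbf{M}^\prime$ has discarded, and these can dominate everything that survives in $\det\mathbf{M}^\prime$. For a concrete counterexample in $\C[z_1,z_2]$ with the paper's order (compare $z_2$-degrees first), take
\[
\mathbf{M}=\begin{pmatrix} z_2^2+z_1z_2 & z_2 & z_1\\ z_2 & 1 & 0\\ z_1 & 0 & 1\end{pmatrix},
\qquad \det\mathbf{M}=z_1z_2-z_1^2 ,
\]
while $\mathbf{M}^\prime$ differs only in its $(1,1)$-entry $z_2^2$ and $\det\mathbf{M}^\prime=-z_1^2$; so $\lt(\det\mathbf{M})=z_1z_2\neq -z_1^2=\lt(\det\mathbf{M}^\prime)$, even though $\det\mathbf{M}^\prime\neq 0$. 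So the unconditional statement is false, and no ``repeat the argument'' fallback can repair it; the correct statement carries the hypothesis that the lex-maximal monomial $\mu$ survives with nonzero coefficient in $\det\mathbf{M}^\prime$ (equivalently, state the conclusion as the agreement of the coefficient of $\mu$ in both determinants). To be fair, the paper's own proof has the same gap; in its only application (Lemma~\ref{lem:lt-det-ja}, feeding Proposition~\ref{prop:constant-term}) the hypothesis holds, since the degree count in $z_m$ forces the maximal terms through the single entry $(\bj_\ba)_{mm}$ and the resulting leading coefficient is computed to be nonzero. Your instinct that ``in our applications this does not happen'' is exactly right---make that the hypothesis of the proposition rather than an afterthought contradicted by your final sentence.
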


\begin{proof}
    The proof is immediate from the Leibniz formula and the fact that
    the leading term of a product is the product of the leading terms.
\end{proof}

\section{A Determinant Identity}
\label{sec:determinants}

While studying a question about the critical values of complex
polynomials, Beardon, Carne, and Ng prove the following theorem.

\begin{thm}[\cite{beardon02}]\label{thm:beardon-det}
	For any $\bz = (z_1,\ldots,z_n) \in \mathbb{C}^n$, define 
	\[p(z) = \displaystyle\int_0^z (w-z_1)\cdots (w-z_n)\ dw\]
	and let $\theta\colon\mathbb{C}^n \to \mathbb{C}^n$ be the 
	map which sends $(z_1,\ldots,z_n)$ to $(p(z_1),\ldots,p(z_n))$.
	Then the Jacobian matrix for $\theta$ is
	the $n\times n$ matrix $\bj$ with $(i,j)$-entry 
	$(\bj)_{ij} = \frac{\partial}{\partial z_i} p(z_j)$ and
	$\bj$ is invertible if $z_1,\ldots,z_n$
	are distinct and nonzero.
\end{thm}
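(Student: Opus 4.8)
The plan is to establish the explicit factorization of Theorem~\ref{thma:beardon-det}, from which invertibility is immediate; the argument mirrors the evaluation of the Vandermonde determinant in Theorem~\ref{thm:vandermonde}. First I would rewrite the entries of $\bj$. Because $p'(z_j) = (z_j-z_1)\cdots(z_j-z_n) = 0$, differentiating $p(z_j) = \int_0^{z_j}\prod_k(w-z_k)\,dw$ with respect to $z_i$ annihilates the boundary contribution even when $i=j$, leaving
\[
    (\bj)_{ij} = -\int_0^{z_j}\prod_{k\neq i}(w-z_k)\,dw
\]
for all $i,j$. Each entry is homogeneous of degree $n$ in $z_1,\ldots,z_n$, so $\det\bj$ is homogeneous of degree $n^2$, and the goal is to exhibit linear factors accounting for this entire degree and then to pin down the leading scalar.

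Next I would produce the factors, as in the proof of Theorem~\ref{thm:vandermonde}. Setting $z_j=0$ makes every entry of the $j$-th column equal to $\int_0^0(\cdots)\,dw=0$, so $z_j$ divides $\det\bj$. For a fixed pair $j\neq k$, subtracting row $k$ from row $j$ (which does not change the determinant) replaces row $j$ by
\[
    \Big(\int_0^{z_\ell}\big[\textstyle\prod_{m\neq k}(w-z_m) - \prod_{m\neq j}(w-z_m)\big]\,dw\Big)_\ell
    = (z_k-z_j)\Big(\int_0^{z_\ell}\textstyle\prod_{m\neq j,k}(w-z_m)\,dw\Big)_\ell ,
\]
since the two products differ only in the factor $(w-z_j)$ versus $(w-z_k)$; this extracts one factor of $z_j-z_k$. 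In the resulting matrix, subtracting column $k$ from column $j$ turns each entry of column $j$ into an integral of the form $\int_{z_k}^{z_j}(\cdots)\,dw$, which is the zero polynomial when $z_j=z_k$ and so is divisible by $z_j-z_k$ a second time. Hence $(z_j-z_k)^2\mid\det\bj$. Since the $z_j$ and the $z_k-z_j$ with $j<k$ are pairwise non-associate primes, the product $\big(\prod_j(-z_j)\big)\big(\prod_{j\neq k}(z_k-z_j)\big)$ — which is a unit times $\big(\prod_j z_j\big)\big(\prod_{j<k}(z_k-z_j)^2\big)$ — divides $\det\bj$, and it too is homogeneous of degree $n+n(n-1)=n^2$. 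Therefore $\det\bj = C\cdot\big(\prod_j(-z_j)\big)\big(\prod_{j\neq k}(z_k-z_j)\big)$ for some constant $C$.

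To compute $C$, I would compare leading terms in the lexicographic order with $z_1<\cdots<z_n$. By Proposition~\ref{prop:lt-det}, $\lt(\det\bj)=\lt(\det\bj')$ where $(\bj')_{ij}=\lt((\bj)_{ij})$; expanding $\prod_{k\neq i}(w-z_k)$ in elementary symmetric polynomials and integrating shows that every entry of the last column of $\bj'$ is a nonzero multiple of $z_n^n$, and that for $j<n$ the entry $(\bj')_{ij}$ carries $z_n$ to degree $1$ when $i\neq n$ and to degree $0$ when $i=n$. Hence the $z_n$-degree of the summand for $\sigma\in\sym_n$ in the Leibniz formula for $\det\bj'$ is $2n-2+[\sigma(n)=n]$, which is maximal exactly when $\sigma(n)=n$; deleting the last row and column and dividing out the resulting power of $z_n$ recovers, up to sign and a common factor of $z_n$, the $(n-1)$-variable Jacobian, so induction on $n$ (with the trivial case $n=1$) gives $\lt(\det\bj)=\frac{(-1)^{\binom{n+1}{2}}}{n!}\prod_m z_m^{2m-1}$. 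On the other hand the leading term of $\big(\prod_j(-z_j)\big)\big(\prod_{j\neq k}(z_k-z_j)\big)$ is $(-1)^{n+\binom{n}{2}}\prod_m z_m^{2m-1}$, and since $n+\binom{n}{2}=\binom{n+1}{2}$ this forces $C=1/n!$. Consequently $\det\bj$ vanishes precisely when some $z_j$ is zero or two of the $z_j$ coincide, which yields the invertibility criterion and in particular reproves Theorem~\ref{thm:beardon-det}.

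The one genuinely delicate point is this last leading-term analysis: verifying that among all permutations $\sigma$ the identity produces the unique lex-largest monomial of $\det\bj'$, with no ties and no cancellation. Organizing the induction around the variable $z_n$ — so that the ``forced diagonal entry'' mechanism from the Vandermonde argument recurs one variable at a time — is what keeps this under control. By comparison, the divisibility steps, the degree count, and the reduction of the entries via $p'(z_j)=0$ are all routine.
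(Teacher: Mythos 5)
Your argument is correct, and it reaches the stated invertibility criterion by establishing the same factorization $\det\bj=\frac{1}{n!}\bigl(\prod_j(-z_j)\bigr)\bigl(\prod_{j\neq k}(z_k-z_j)\bigr)$ that the paper records as Corollary~\ref{cor:simple-det}. The overall skeleton matches the paper's Vandermonde-style template (divisibility by pairwise non-associate primes, a homogeneity count, then a leading-term comparison in lex order), but your route to the divisibility factors is genuinely different. The paper proves the general multiplicity statement, Theorem~\ref{thm:general-det}, and obtains the present theorem by specializing $\ba=(1,\ldots,1)$; its divisibility steps (Propositions~\ref{prop:col-divisible} and~\ref{prop:col-diff-divisible}) rest on the derivative-sequence machinery of Section~\ref{sec:antiderivatives}, where the ``emphasized'' antiderivative $\tilde{p}_{\ba,j}$ makes $(w-z_j)^{a_j+1}$ visible and yields divisibility of the $k$-th column difference by $(z_k-z_j)^{a_j+a_k}$ in one stroke. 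You instead work directly with the closed form $(\bj)_{ij}=-\int_0^{z_j}\prod_{k\neq i}(w-z_k)\,dw$ (valid at $i=j$ because $p'(z_j)=0$ kills the boundary term) and extract $(z_k-z_j)^2$ from a row difference (the two integrands differ only in one linear factor) followed by a column difference (an integral $\int_{z_k}^{z_j}$ vanishes on $z_j=z_k$). This is more elementary and bypasses Section~\ref{sec:antiderivatives} entirely, at the cost of being tailored to multiplicity one: with higher multiplicities a single row difference does not produce the full power $(z_k-z_j)^{a_j+a_k}$, which is exactly what the emphasized-antiderivative bookkeeping buys the paper. Your leading-term analysis, by contrast, is essentially the paper's Lemmas~\ref{lem:zm-exponents} and~\ref{lem:lt-det-ja} and Proposition~\ref{prop:constant-term} specialized to $a_i=1$: the $z_n$-degree bookkeeping forcing $\sigma(n)=n$, the factor $(-z_n)^{n-1}$ common to the remaining entries, and the induction are the same, and your concern about ties and cancellation is resolved exactly as in the paper, since summands with $\sigma(n)\neq n$ have strictly smaller $z_n$-degree and Proposition~\ref{prop:lt-det} plus the inductive nonvanishing of $\lt(\det\bj^{(n-1)})$ rule out cancellation at the top degree. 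The only presentational gap is that your induction implicitly uses the leading coefficient $-\tfrac1n$ of $(\bj)_{nn}$ (the paper gets the analogous value from Proposition~\ref{prop:two-factor-comp}); it follows from your own expansion, since only the $w^{n-1}$ term of the integrand contributes to $z_n^n$, but it deserves a sentence.
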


We generalize this result to the setting where $z_1,\ldots,z_n$ are nondistinct.

\begin{defn}\label{def:gen-poly}
    Let $\bz = (z_1,\ldots,z_m) \in \mathbb{C}^m$ and
    let $\ba = (a_1,\ldots,a_m)$ be an $m$-tuple of positive 
    integers. Define the polynomial $p_\ba\in \C[z]$ by
    \[
        p_\ba(z) = p_{\bz,\ba}(z) = 
        \int_0^z (w-z_1)^{a_1}\cdots (w-z_m)^{a_m}\ dw.
    \]
    Notice that each $z_i$ is a critical point for $p_\ba$ with 
    multiplicity $a_i$, and if we let $n=a_1 + \cdots + a_m$, then 
    $p_\ba$ is a polynomial of degree $n+1$.  The reader should 
    note that one can view $z_1,\ldots,z_m$ as complex variables
    (instead of specific complex numbers) and then regard $p_\ba$ 
    as a polynomial in the ring $R[z]$, where 
    $R = \C[z_1,\ldots,z_m]$. In the remainder of the article, we 
    switch between these two viewpoints as needed.  With this in 
    mind, define $\theta_\ba\colon \mathbb{C}^m \to \mathbb{C}^m$ 
    to be the map which sends $(z_1,\ldots,z_m)$ to 
    $(p_\ba(z_1),\ldots,p_\ba(z_m))$. Then the Jacobian matrix 
    for $\theta_\ba$ is the $m\times m$ matrix $\bj_\ba$ with 
    $(i,j)$-entry 
    $(\bj_\ba)_{ij} = \frac{\partial}{\partial z_i} p_\ba(z_j)$.
\end{defn}

The next result factors the determinant of $\bj_{\ba}$ and 
proves Theorem~\ref{thma:main-specific}.

\begin{thm}\label{thm:general-det}
    Let $\ba = (a_1,\ldots,a_m)$ be an $m$-tuple of positive integers.
	Then the determinant of $\bj_{\ba}$ is
	\[
	    \det \bj_{\ba} = \frac{1}{\binom{n}{a_1,\ldots,a_m}}
	    \left( \prod_{j \in [m]} (-z_j)^{a_j} \right)
	    \left( \prod_{\substack{j,k\in [m] \\ j\neq k}} 
	    (z_k-z_j)^{a_j} \right).
	\]
	Hence, $\bj_{\ba}$ is invertible if and only if $z_1,\ldots,z_k$
	are distinct and nonzero.
\end{thm}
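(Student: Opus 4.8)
The plan is to follow the three-step strategy used for the Vandermonde determinant in the proof of Theorem~\ref{thm:vandermonde}: realize the entries of $\bj_\ba$ as elements of $R = \C[z_1,\ldots,z_m]$, show that the claimed product divides $\det\bj_\ba$, compare degrees to see that the two sides agree up to a scalar, and then pin down the scalar with a leading-term computation. The integrand $(w-z_1)^{a_1}\cdots(w-z_m)^{a_m}$ is homogeneous of degree $n$ in $w,z_1,\ldots,z_m$, so $p_\ba(z)$ is homogeneous of degree $n+1$, each $p_\ba(z_j)$ is homogeneous of degree $n+1$ in $z_1,\ldots,z_m$, and each entry $(\bj_\ba)_{ij} = \frac{\partial}{\partial z_i}p_\ba(z_j)$ is homogeneous of degree $n$. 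Hence by the Leibniz formula $\det\bj_\ba$ is homogeneous of degree $mn$, which matches the degree $n + n(m-1) = mn$ of the right-hand side.

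For the divisibility I would emphasize an index $j$ as in Example~\ref{ex:gen-anti}. Writing $p_\ba = (a_1!\cdots a_m!)(\tilde{p}_{\ba,j} - \tilde{p}_{\ba,j}(0))$, observations (1) and (2) of that example give $p_\ba(z_j) = -(a_1!\cdots a_m!)\,\tilde{p}_{\ba,j}(0)$, which is divisible by $z_j^{a_j+1}$; differentiating with respect to any $z_i$ shows every entry of column $j$ of $\bj_\ba$ is divisible by $z_j^{a_j}$, so each term of the Leibniz expansion, and hence $\det\bj_\ba$, is divisible by $\prod_j z_j^{a_j}$. Similarly $p_\ba(z_k) - p_\ba(z_j) = (a_1!\cdots a_m!)\,\tilde{p}_{\ba,j}(z_k)$, which by observation (3) is divisible by $(z_k-z_j)^{a_j+a_k+1}$; subtracting column $k$ from column $j$ then produces a column whose entries are all divisible by $(z_k-z_j)^{a_j+a_k}$, so $(z_k-z_j)^{a_j+a_k}$ divides $\det\bj_\ba$. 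As the $z_\ell$ and the $z_k - z_j$ are pairwise non-associate primes of $R$, their product divides $\det\bj_\ba$; this product equals $\left(\prod_j(-z_j)^{a_j}\right)\prod_{j\neq k}(z_k-z_j)^{a_j}$ up to sign and already has degree $mn$, so $\det\bj_\ba = C\left(\prod_j(-z_j)^{a_j}\right)\prod_{j\neq k}(z_k-z_j)^{a_j}$ for some constant $C$.

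To evaluate $C$ I would pass to leading terms under the variable order $z_1 < \cdots < z_m$. A short direct computation, in which the term $p_\ba'(z_j) = \prod_\ell(z_j-z_\ell)^{a_\ell}$ vanishes, shows $(\bj_\ba)_{ij} = -a_i\int_0^{z_j}(w-z_i)^{a_i-1}\prod_{\ell\neq i}(w-z_\ell)^{a_\ell}\,dw$ for all $i,j$. One then checks that the leading monomial of this entry equals $z_j^{A_j}\prod_{\ell>j}z_\ell^{a_\ell}$ whenever $i \le j$ (independently of $i$) and is strictly smaller when $i>j$, where $A_j = a_1+\cdots+a_j$. By Proposition~\ref{prop:lt-det} together with the compatibility of a monomial order with multiplication, the identity permutation strictly dominates every other term of the Leibniz formula, so $\lt(\det\bj_\ba) = \prod_j \lt((\bj_\ba)_{jj})$. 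Each diagonal leading coefficient is found by setting $z_\ell = 0$ for $\ell < j$, noting that each remaining factor $(w-z_\ell)^{a_\ell}$ with $\ell > j$ must then contribute its constant term $(-z_\ell)^{a_\ell}$, and applying Proposition~\ref{prop:two-factor-comp} to the two-factor integral $\int_0^{z_j}w^{A_{j-1}}(w-z_j)^{a_j-1}\,dw$. Multiplying over $j$, the factorials telescope to $\frac{a_1!\cdots a_m!}{n!} = \binom{n}{a_1,\ldots,a_m}^{-1}$, and checking that the signs agree with those in the leading term of the claimed product gives $C = \binom{n}{a_1,\ldots,a_m}^{-1}$. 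The final assertion follows since $\prod_j z_j^{a_j}\prod_{j\neq k}(z_k-z_j)^{a_j}$ is nonzero precisely when $z_1,\ldots,z_m$ are distinct and nonzero. The bulk of the work is the leading-term step — in particular the verification that the identity permutation is the unique dominant one and the sign bookkeeping in the telescoping product.
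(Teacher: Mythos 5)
Your proposal is correct and follows essentially the same route as the paper: divisibility of the columns by $z_j^{a_j}$ and of column differences by $(z_k-z_j)^{a_j+a_k}$ via the emphasized antiderivatives of Example~\ref{ex:gen-anti}, a homogeneity/degree count, and a leading-term comparison to pin down the constant. The only difference is organizational: where you argue directly that the identity permutation dominates the Leibniz expansion and telescope the diagonal leading coefficients via Proposition~\ref{prop:two-factor-comp}, the paper obtains the same coefficient $(-1)^{a_1+2a_2+\cdots+ma_m}\binom{n}{a_1,\ldots,a_m}^{-1}$ by induction on $m$ (Lemmas~\ref{lem:zm-exponents} and~\ref{lem:lt-det-ja} and Proposition~\ref{prop:constant-term}), which amounts to the same diagonal-product computation.
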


Setting each $a_i = 1$ provides the following corollary, which proves 
Theorem~\ref{thma:beardon-det} and immediately yields an
alternate proof for Theorem~\ref{thm:beardon-det}.

\begin{cor}\label{cor:simple-det}
	If $\ba$ is the $n$-tuple $(1,\ldots,1)$, then
	$\bj_{\ba}$ is the Jacobian matrix $\bj$ defined in the introduction and
	\[\det \bj_{\ba} = \frac{1}{n!} \left( \prod_{j\in [n]} (-z_j)\right) 
	\left( \prod_{\substack{j,k\in [n]\\ j\neq k}} (z_k-z_j)\right).\]
\end{cor}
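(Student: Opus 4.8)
The plan is to mimic the Vandermonde computation of Theorem~\ref{thm:vandermonde}: first exhibit the claimed product as a divisor of $\det\bj_\ba$ in $R=\C[z_1,\ldots,z_m]$, and then pin down the leftover scalar by comparing leading terms. The starting observation is that, viewing $p_\ba$ as an element of $R[z]$ as in Definition~\ref{def:gen-poly}, it is homogeneous of degree $n+1$ (the integrand $\prod_\ell(w-z_\ell)^{a_\ell}$ is homogeneous of degree $n$ and integration in $w$ raises the degree by one). Hence each $\theta_{\ba,j}(\bz)=p_\ba(z_j)$ is homogeneous of degree $n+1$ in $\bz$, each entry $(\bj_\ba)_{ij}=\frac{\partial}{\partial z_i}p_\ba(z_j)$ is homogeneous of degree $n$, and $\det\bj_\ba$ is homogeneous of degree $mn$ --- exactly the degree of $D_0 := \left(\prod_{j\in[m]}(-z_j)^{a_j}\right)\left(\prod_{j\neq k}(z_k-z_j)^{a_j}\right)$. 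So it suffices to prove $D_0\mid\det\bj_\ba$ in $R$ and then compute the constant $c_\ba\in\C$ with $\det\bj_\ba=c_\ba D_0$.

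For the divisibility the essential input is the ``emphasize an index'' construction of Example~\ref{ex:gen-anti}. Emphasizing $j$ and using $\tilde{p}_{\ba,j}(z_j)=0$ together with $p_\ba=(a_1!\cdots a_m!)(\tilde{p}_{\ba,j}(z)-\tilde{p}_{\ba,j}(0))$ gives $p_\ba(z_j)=-(a_1!\cdots a_m!)\tilde{p}_{\ba,j}(0)$, which is divisible by $z_j^{a_j+1}$ by item~(2) of that example; differentiating with respect to any $z_i$ then leaves every entry of the $j$-th column of $\bj_\ba$ divisible by $z_j^{a_j}$, so $\prod_j z_j^{a_j}\mid\det\bj_\ba$. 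Likewise, for $j\neq k$ we get $p_\ba(z_k)-p_\ba(z_j)=(a_1!\cdots a_m!)\tilde{p}_{\ba,j}(z_k)$, which by item~(3) is divisible by $(z_k-z_j)^{a_j+a_k+1}$; subtracting the $j$-th column from the $k$-th column leaves the determinant unchanged, and after differentiation every entry of the new column is divisible by $(z_k-z_j)^{a_j+a_k}$, so $(z_k-z_j)^{a_j+a_k}\mid\det\bj_\ba$. Since $z_1,\ldots,z_m$ and the $z_k-z_j$ $(j<k)$ are pairwise non-associate irreducibles of $R$, the product of these prime powers divides $\det\bj_\ba$, and that product equals $\pm D_0$ (pairing the ordered pairs $(j,k)$ and $(k,j)$ converts $(z_k-z_j)^{a_j}(z_j-z_k)^{a_k}$ into $\pm(z_k-z_j)^{a_j+a_k}$). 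So $D_0\mid\det\bj_\ba$, and the degree count gives $\det\bj_\ba=c_\ba D_0$.

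To evaluate $c_\ba$ I would induct on $m$ by peeling off the largest variable $z_m$; this is the analogue of the leading-term step in the Vandermonde proof and uses Proposition~\ref{prop:lt-det} in spirit. Write $\ba'=(a_1,\ldots,a_{m-1})$ and $n'=n-a_m$. A short computation (substitute $w=z_m u$ and evaluate the resulting beta integrals, exactly as in Proposition~\ref{prop:two-factor-comp}) shows that, as polynomials in $z_m$ over $\C[z_1,\ldots,z_{m-1}]$: for $j<m$ the top-$z_m$ part of $(\bj_\ba)_{ij}$ is $(-1)^{a_m}z_m^{a_m}(\bj_{\ba'})_{ij}$ when $i<m$ and $(-1)^{a_m}a_m z_m^{a_m-1}p_{\ba'}(z_j)$ when $i=m$; while in the last column every entry has $z_m$-degree $n$, with $(\bj_\ba)_{mm}$ contributing $(-1)^{a_m}\frac{a_m!\,(n-a_m)!}{n!}z_m^n$. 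A degree count in the Leibniz formula then shows that the only permutations contributing to $z_m^{n+(m-1)a_m}$ are those fixing $m$, and summing their contributions gives that the coefficient of $z_m^{n+(m-1)a_m}$ in $\det\bj_\ba$ is $(-1)^{ma_m}\frac{a_m!\,(n-a_m)!}{n!}\det\bj_{\ba'}$. The same coefficient in $D_0$ is $(-1)^{ma_m}D_0'$, the analogous product for $\ba'$. Equating, cancelling $(-1)^{ma_m}$, and applying the inductive hypothesis $\det\bj_{\ba'}=\frac{1}{\binom{n'}{a_1,\ldots,a_{m-1}}}D_0'$ yields $c_\ba=\frac{a_m!\,(n-a_m)!}{n!}\cdot\frac{a_1!\cdots a_{m-1}!}{(n-a_m)!}=\frac{a_1!\cdots a_m!}{n!}=\frac{1}{\binom{n}{a_1,\ldots,a_m}}$; the base case $m=1$ is the direct computation $\det\bj_{(n)}=(-z_1)^n$. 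The invertibility assertion is then immediate since $c_\ba\neq0$, and Corollary~\ref{cor:simple-det} follows by specializing to $\ba=(1,\ldots,1)$, where $\binom{n}{1,\ldots,1}=n!$.

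I expect the main obstacle to be the bookkeeping in the inductive step: correctly identifying the top-$z_m$ behavior of every entry of $\bj_\ba$ --- in particular in the last column, which is where Proposition~\ref{prop:two-factor-comp} (or an equivalent beta-integral evaluation) is really used --- and then the degree count isolating the permutations that fix $m$. One could instead run a single global leading-term computation in the lexicographic order of Definition~\ref{def:monomial-order}, but the inductive peeling seems to package the same bookkeeping more transparently. A minor point to watch is that the divisibility step genuinely needs the ``$+1$'' in items~(2) and~(3) of Example~\ref{ex:gen-anti} in order to absorb one derivative.
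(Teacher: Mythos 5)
Your proposal is correct and follows essentially the same route as the paper: it deduces the corollary by specializing the general identity of Theorem~\ref{thm:general-det}, which you establish exactly as the paper does --- column divisibility by $z_j^{a_j}$ and column-difference divisibility by $(z_k-z_j)^{a_j+a_k}$ via the emphasized antiderivatives of Example~\ref{ex:gen-anti}, a homogeneity/degree count, and an induction on $m$ peeling off $z_m$ with Proposition~\ref{prop:two-factor-comp} handling the $(m,m)$ entry. Your comparison of top-$z_m$ coefficients is just a mild rephrasing of the paper's lexicographic leading-term recursion (Lemmas~\ref{lem:zm-exponents} and \ref{lem:lt-det-ja} and Proposition~\ref{prop:constant-term}), so no substantive difference remains.
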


\begin{example}[Two variables with multiplicity]
	It is easy to compute the determinant in small cases using 
	a software package such as \emph{SageMath}.  Consider, for example, 
	the case where $n=2$ and $\ba = (2,3)$, and for readability we 
	write $\bz = (x,y)$ instead of $\bz=(z_1,z_2)$.  First we compute 
	the polynomial $p_\ba(z)$, with the details omitted:
	\begin{align*}
        p_\ba(z) =& \int_0^z (w-x)^2(w-y)^3\ dw \\
        =&\ \frac{z^6}{6} - (2x +3y)\frac{z^5}{5} + (x^2 + 6xy + 3y^2) \frac{z^4}{4} \\
        & -(3x^2 y + 6xy^2 + y^3) \frac{z^3}{3} + (3x^2y^2 + 2xy^3) \frac{z^2}{2}
        -(x^2 y^3) z
	\end{align*}
	Next we compute the map $\theta_\ba\colon \C^2\to \C^2$:
	\begin{align*}
	    \theta_\ba(x,y) &= (p_\ba(x),p_\ba(y)) \\
	    &= \left(\frac{1}{60} x^{6} - \frac{1}{10} x^{5} y + 
	    \frac{1}{4} x^{4} y^{2} - \frac{1}{3} x^{3} y^{3},
	    -\frac{1}{4} x^{2} y^{4} + \frac{1}{10} x y^{5} - \frac{1}{60} y^{6}\right)
	\end{align*}
	Then we compute the Jacobian $\bj_{\ba}$:
	\[\bj_{\ba} = 
    	\begin{pmatrix}
    	\frac{1}{10} \, x^{5} - \frac{1}{2} \, x^{4} y + x^{3} y^{2} - x^{2} y^{3} & 
    	-\frac{1}{2} \, x y^{4} + \frac{1}{10} \, y^{5} \\[4pt]
    	-\frac{1}{10} \, x^{5} + \frac{1}{2} \, x^{4} y - x^{3} y^{2} &
    	-x^{2} y^{3} + \frac{1}{2} \, x y^{4} - \frac{1}{10} \, y^{5}
    	\end{pmatrix}
	\]
	And finally we compute the determinant of $\bj_{\ba}$:
	\[\det \bj_{\ba} = -\frac{1}{10}x^2y^3(x-y)^5\]
    One could instead compute this determinant from three observations. 
    First, notice that the first column is divisible by $x^2$ and the 
    second column by $y^3$. Next, if we subtract the second column from 
    the first and factor, then each resulting entry is divisible by 
    $(x-y)^5$. Thus $\det \bj_{\ba}$ is a homogeneous polynomial of 
    degree $10$ which is divisible by $x^2 y^3 (x-y)^5$, so 
    \[ \det \bj_{\ba} = C x^2 y^3 (x-y)^5 \] 
    for some constant $C$. By inspection, the leading term of
    $x^2 y^3 (x-y)^5$ is $-x^2y^8$ while the leading term of
    $\det \bj_\ba$ is $\frac{1}{10}x^2y^8$. Therefore, $C = \frac{-1}{10}$
    and we are done.
\end{example}

To emphasize its straightforward structure, we give the proof of
Theorem~\ref{thm:general-det} here before continuing on to 
prove the prerequisite lemmas. 

\begin{proof}[Proof of Theorem~\ref{thm:general-det}]
    The proof is in three steps. First, Proposition~\ref{prop:col-divisible}
    tells us that $\det \bj_{\ba}$ is divisible by $z_j^{a_j}$ 
    for each $j\in [m]$. Second, Proposition~\ref{prop:col-diff-divisible}
	implies that the determinant is divisible by
	$(z_k-z_j)^{a_j+a_k}$ for each $j<k$ in $[m]$. 
	Since these polynomials are powers of distinct 
	non-associate prime elements in
	the polynomial ring $\mathbb{C}[z_1,\ldots,z_m]$ and we 
	know that $\det \bj_{\ba}$ is a homogeneous polynomial of 
	degree at most $n(a_1+\cdots+a_m)$, it must be that $\det \bj_{\ba}$ 
	is a constant multiple of their product. After rewriting 
	$(z_k - z_j)^{a_j + a_k} = (-1)^{a_k}(z_k - z_j)^{a_j}(z_j-z_k)^{a_k}$ 
	and absorbing powers of $-1$ into the constant, we may write
	$\det \bj_{\ba} = C\cdot D(\bz)$, where $C$ is a constant and
	\[
	    D(\bz) = \left(\prod_{j\in [m]} (-z_j)^{a_j} \right)
	    \left(\prod_{\substack{j,k\in [m] \\ j\neq k}} 
	    (z_k-z_j)^{a_j}\right).
	\]
	Lastly, we determine the value of $C$ by comparing the
	leading coefficients of $\det \bj_\ba$ and $D(\bz)$. We can see 
	that $\lt(D(\bz))$ has coefficient $(-1)^{a_1+2a_2+\cdots+ma_m}$, 
	obtained by always choosing the higher-index terms when expanding 
	the product $(z_k-z_j)^{a_j}$. On the other hand, we know by 
	Proposition~\ref{prop:constant-term} that $\lt(\det \bj_\ba)$ 
	has coefficient 
	$(-1)^{a_1+2a_2+\cdots+ma_m}\binom{n}{a_1,\ldots,a_m}^{-1}$. 
	Thus, $C = \binom{n}{a_1,\ldots,a_m}^{-1}$ and we are done.	
\end{proof}

We now prove several lemmas and the three propositions used in
the proof of Theorem~\ref{thm:general-det}. The first two
propositions are straightforward, while the third is more
complicated.

\begin{lem}[Jacobian entries]\label{lem:jacobian-entries}    
    Let $\bz = (z_1,\ldots,z_m)\in \mathbb{C}^m$ and let
	$\ba = (a_1,\ldots,a_m)$ be an $m$-tuple of positive integers.
	Then for any $i,j\in [m]$ and for any choice of $k\in [m]$, 
	the $(i,j)$-entry of the Jacobian matrix $J_{\ba}$ can be written as
	\[
	    (\bj_\ba)_{ij} = \frac{\partial}{\partial z_i} p_\ba(z_j) 
	    = (a_1! \cdots a_m!) \frac{\partial}{\partial z_i} \Big[
	    \tilde{p}_{\ba,k}(z_j) - \tilde{p}_{\ba,k}(0)\Big].
	\]
\end{lem}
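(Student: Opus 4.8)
The plan is to deduce this directly from the two identities recorded in Example~\ref{ex:gen-anti}. Recall from that example that $(a_1!\cdots a_m!)\,\tilde{p}_\ba(z) = p_\ba(z)$ as polynomials in $R[z]$, where $R = \C[z_1,\ldots,z_m]$, and that for every choice of emphasized index $k\in[m]$ one has $\tilde{p}_\ba(z) = \tilde{p}_{\ba,k}(z) - \tilde{p}_{\ba,k}(0)$. Stringing these together gives the polynomial identity $p_\ba(z) = (a_1!\cdots a_m!)\big(\tilde{p}_{\ba,k}(z) - \tilde{p}_{\ba,k}(0)\big)$ in $R[z]$, valid for any $k$.

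Next I would substitute $z = z_j$. Since both sides are genuine polynomials in $R[z]$, evaluation at $z = z_j$ is a ring homomorphism $R[z]\to R$ and therefore preserves the identity, yielding $p_\ba(z_j) = (a_1!\cdots a_m!)\big(\tilde{p}_{\ba,k}(z_j) - \tilde{p}_{\ba,k}(0)\big)$ as an equation in $R = \C[z_1,\ldots,z_m]$. Applying the derivation $\frac{\partial}{\partial z_i}$ to both sides — using that it is $\C$-linear, so it passes through the scalar $a_1!\cdots a_m!$ and across the difference — produces exactly the claimed formula for $(\bj_\ba)_{ij}$.

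There is essentially no obstacle here beyond bookkeeping; the one point that merits care is the interplay between the two viewpoints on $p_\ba$ described in Definition~\ref{def:gen-poly}, namely $p_\ba(z)$ as a polynomial in $z$ with coefficients in $R$ versus $p_\ba(z_j)$ as an element of $R$. One must note that the substitution $z\mapsto z_j$ and the partial derivative $\partial/\partial z_i$ are compatible because everything in sight is polynomial, so each line of the argument is literally an equality of polynomials in $\C[z_1,\ldots,z_m]$ and no analytic or convergence issue arises. It is also worth remarking explicitly that the right-hand side is independent of the choice of $k$ even though $\tilde{p}_{\ba,k}$ itself depends on the auxiliary derivative sequence $(g_\ast)$ used to construct it — this independence is already encoded in the identity $\tilde{p}_\ba = \tilde{p}_{\ba,k} - \tilde{p}_{\ba,k}(0)$ from Example~\ref{ex:gen-anti}, which is what makes the ``for any choice of $k$'' in the statement legitimate.
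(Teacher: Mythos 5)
Your proposal is correct and is essentially the paper's own argument: the paper proves this lemma in one line by citing Example~\ref{ex:gen-anti} (which gives $p_\ba = (a_1!\cdots a_m!)\tilde{p}_\ba$ and $\tilde{p}_\ba = \tilde{p}_{\ba,k} - \tilde{p}_{\ba,k}(0)$ for any $k$) together with Definition~\ref{def:gen-poly}, and your write-up simply makes the substitution $z \mapsto z_j$ and the application of $\partial/\partial z_i$ explicit. The extra care you take about evaluation being a ring homomorphism and about independence of the choice of $k$ is sound but not a departure from the paper's route.
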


\begin{proof}
    This follows immediately from Example~\ref{ex:gen-anti}
    and Definition~\ref{def:gen-poly}.
\end{proof}

\begin{prop}[Columns]\label{prop:col-divisible}
	Each entry of the $j$-th column of $\bj_{\ba}$ is divisible by~$z_j^{a_j}$.
\end{prop}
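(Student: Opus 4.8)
The plan is to emphasize the index $j$ and then invoke the divisibility facts recorded in Example~\ref{ex:gen-anti}. Fix $j \in [m]$ and consider an arbitrary entry $(\bj_\ba)_{ij} = \frac{\partial}{\partial z_i} p_\ba(z_j)$ of the $j$-th column. By Lemma~\ref{lem:jacobian-entries} applied with the choice $k = j$, this entry equals $(a_1!\cdots a_m!)\,\frac{\partial}{\partial z_i}\big[\tilde{p}_{\ba,j}(z_j) - \tilde{p}_{\ba,j}(0)\big]$. Observation (1) of Example~\ref{ex:gen-anti} says $\tilde{p}_{\ba,j}(z_j) = 0$, so the entry collapses to $-(a_1!\cdots a_m!)\,\frac{\partial}{\partial z_i}\tilde{p}_{\ba,j}(0)$, an element of $R = \C[z_1,\ldots,z_m]$.

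Next I would apply observation (2) of Example~\ref{ex:gen-anti}, which tells us that $\tilde{p}_{\ba,j}(0)$ is divisible by $z_j^{a_j+1}$ in $R$. It then only remains to track what happens to this divisibility under $\frac{\partial}{\partial z_i}$. If $i \neq j$, differentiation with respect to $z_i$ leaves the factor $z_j^{a_j+1}$ untouched, so $(\bj_\ba)_{ij}$ is divisible by $z_j^{a_j+1}$, hence certainly by $z_j^{a_j}$. If $i = j$, then differentiating a multiple of $z_j^{a_j+1}$ with respect to $z_j$ produces a multiple of $z_j^{a_j}$. In either case $(\bj_\ba)_{ij}$ is divisible by $z_j^{a_j}$, which is exactly the claim since $i$ was arbitrary.

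There is no genuine obstacle here: all of the real work has been front-loaded into Example~\ref{ex:gen-anti}, and the proof is essentially a one-line consequence of observations (1) and (2) together with Lemma~\ref{lem:jacobian-entries}. The only point requiring a moment's care is the bookkeeping in the case $i = j$, where differentiation costs exactly one power of $z_j$; this is precisely why the exponent appearing in the statement is $a_j$ and not $a_j + 1$.
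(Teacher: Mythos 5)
Your proof is correct and follows essentially the same route as the paper: apply Lemma~\ref{lem:jacobian-entries} with the emphasized index $k=j$, use observations (1) and (2) of Example~\ref{ex:gen-anti}, and note that differentiating a multiple of $z_j^{a_j+1}$ costs at most one power of $z_j$ (exactly one only when $i=j$). Your explicit bookkeeping of the $i=j$ versus $i\neq j$ cases matches the paper's conclusion that the entry is in fact divisible by $z_j^{a_j+1}$ when $i\neq j$.
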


\begin{proof}
    Let $i,j\in [m]$. By Lemma~\ref{lem:jacobian-entries},
    emphasizing $z_j$, we have the following:
    \[
        (\bj_\ba)_{ij} = (a_1! \cdots a_m!) \frac{\partial}{\partial z_i} 
        \Big[\tilde{p}_{\ba,j}(z_j) - \tilde{p}_{\ba,j}(0)\Big]
    \]
    By the observations made in Example~\ref{ex:gen-anti},
    we know that $\tilde{p}_{\ba,j}(z_j) = 0$ and $\tilde{p}_{\ba,j}(0)$ is divisible
    by $z_j^{a_j+1}$. Thus, $(\bj_\ba)_{ij}$ is divisible by $z_j^{a_j}$ 
    if $i = j$ and $z_j^{a_j+1}$ otherwise.
\end{proof}

\begin{prop}[Column differences]\label{prop:col-diff-divisible}
    If the $j$-th column of $\bj_{\ba}$ is subtracted from the
    $k$-th column, then each entry of the new $k$-th column is
	divisible by $(z_k-z_j)^{a_k+a_j}$.
\end{prop}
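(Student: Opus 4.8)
The plan is to work entirely in the polynomial ring $R = \C[z_1,\ldots,z_m]$ and to track divisibility by the prime $z_k - z_j$. After the column operation, the new $(i,k)$-entry is $(\bj_\ba)_{ik} - (\bj_\ba)_{ij} = \frac{\partial}{\partial z_i}\big[p_\ba(z_k) - p_\ba(z_j)\big]$, so it is enough to show that the polynomial $p_\ba(z_k) - p_\ba(z_j)$ is divisible by $(z_k - z_j)^{a_j + a_k + 1}$ in $R$: applying $\frac{\partial}{\partial z_i}$ to a multiple of $(z_k-z_j)^{N}$ lowers the exponent by at most one (product rule), so the resulting entry is a multiple of $(z_k - z_j)^{a_j + a_k}$, as claimed.

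To establish that divisibility I would apply Example~\ref{ex:gen-anti} with the index $k$ emphasized. This writes $p_\ba(z) = (a_1!\cdots a_m!)\big[\tilde p_{\ba,k}(z) - \tilde p_{\ba,k}(0)\big]$, so the constant $\tilde p_{\ba,k}(0)$ cancels in the difference and
\[
    p_\ba(z_k) - p_\ba(z_j) = (a_1!\cdots a_m!)\big[\tilde p_{\ba,k}(z_k) - \tilde p_{\ba,k}(z_j)\big].
\]
Observation (1) of that example gives $\tilde p_{\ba,k}(z_k) = 0$, and observation (3), applied with evaluation point $z_j$ (legitimate since $j \neq k$ when $j < k$), gives that $\tilde p_{\ba,k}(z_j)$ is divisible by $(z_j - z_k)^{a_j + a_k + 1}$. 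Hence $p_\ba(z_k) - p_\ba(z_j)$ is divisible by $(z_k - z_j)^{a_j + a_k + 1}$, and the derivative observation above finishes the proof.

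I do not anticipate a real obstacle; the care needed is purely bookkeeping. First, one must use the evaluate-then-differentiate reading of $(\bj_\ba)_{ij}$ throughout — the same convention under which $\bj_\ba$ is literally the Jacobian of $\theta_\ba$ — so that every entry genuinely lies in $R$ and the elementary implication ``$(z_k-z_j)^{N}\mid f \implies (z_k-z_j)^{N-1}\mid \partial f/\partial z_i$'' applies verbatim. Second, one should note that it is essential to emphasize $k$ rather than $j$: this is exactly what makes the constant terms cancel and the vanishing $\tilde p_{\ba,k}(z_k)=0$ available, so that only observations (1) and (3) of Example~\ref{ex:gen-anti} are needed. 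With these two points noted, the argument is a short chain of implications.
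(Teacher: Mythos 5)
Your proof is correct and is essentially the paper's own argument: the paper likewise cancels the constant term of the emphasized antiderivative, reduces the entry difference to $\frac{\partial}{\partial z_i}\big[\tilde p_{\ba,\cdot}(z_k)-\tilde p_{\ba,\cdot}(z_j)\big]$, invokes observations (1) and (3) of Example~\ref{ex:gen-anti}, and concedes one power of $(z_k-z_j)$ to the differentiation. The only (cosmetic) difference is that the paper emphasizes the index $j$ rather than $k$, which works equally well by symmetry since $\tilde p_{\ba,j}(z_j)=0$ and $\tilde p_{\ba,j}(z_k)$ is divisible by $(z_k-z_j)^{a_j+a_k+1}$; so your side remark that emphasizing $k$ is \emph{essential} is not accurate, though it does not affect the validity of your argument.
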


\begin{proof}
    For $i,j,k\in [m]$ we show that $(\bj_{\ba})_{ik} - (\bj_{\ba})_{ij}$ is divisible by
    $(z_k-z_j)^{a_k+a_j}$. Using Lemma~\ref{lem:jacobian-entries},
    emphasizing $z_j$ in both applications, we have the following:
    \[
        \frac{(\bj_{\ba})_{ik} - (\bj_{\ba})_{ij}}
        {a_1! \cdots a_m!} =
        \frac{\partial}{\partial z_i}
        \Big[ \tilde{p}_{\ba,j}(z_k) - \tilde{p}_{\ba,j}(0)
        \Big] -
        \frac{\partial}{\partial z_i}
        \Big[ \tilde{p}_{\ba,j}(z_j) - \tilde{p}_{\ba,j}(0)
        \Big]
    \]
    By canceling terms, we have that
    \[
        (\bj_{\ba})_{ik} - (\bj_{\ba})_{ij} = 
        (a_1! \cdots a_m!)\frac{\partial}{\partial z_i}
        \Big[\tilde{p}_{\ba,j}(z_k) - \tilde{p}_{\ba,j}(z_j)\Big],
    \]
    and by Example~\ref{ex:gen-anti}, we know that 
    $\tilde{p}_{\ba,j}(z_k)$ is divisible by 
    $(z_k - z_j)^{a_k+a_j+1}$ and
    $\tilde{p}_{\ba,j}(z_j) = 0$. Therefore, 
    $(\bj_{\ba})_{ik} - (\bj_{\ba})_{ij}$ is divisible
    by $(z_k - z_j)^{a_k+a_j}$ if $i\in \{j,k\}$ and
    $(z_k - z_j)^{a_k+a_j+1}$ otherwise, so we are done.
\end{proof}

The last proposition used in the proof of Theorem~\ref{thm:general-det}
concerns the coefficient of the leading term in $\det \bj_\ba$. As a first
step, we consider the highest power of $z_m$ which appears in each 
entry of $\bj_\ba$ and apply this to the leading term of $\det \bj_\ba$.

\begin{lem}[Exponents of $z_m$]\label{lem:zm-exponents}
    Let $\ba = (a_1,\ldots,a_m)$ be an $m$-tuple of positive integers.
    Then the highest exponent of $z_m$ appearing in the entry $(\bj_\ba)_{ij}$
    is $a_m$ if $i,j < m$, $a_m-1$ if $j < i = m$, and $n$ if $j = m$.
\end{lem}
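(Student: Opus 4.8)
The plan is to use the "emphasize $z_m$" decomposition from Example~\ref{ex:gen-anti} together with Lemma~\ref{lem:jacobian-entries}. Applying Lemma~\ref{lem:jacobian-entries} with $k = m$, we write $(\bj_\ba)_{ij} = (a_1!\cdots a_m!)\,\frac{\partial}{\partial z_i}[\tilde{p}_{\ba,m}(z_j) - \tilde{p}_{\ba,m}(0)]$. The key is that when $m$ is emphasized, the factor $f_{a_m}(w) = \frac{1}{a_m!}(w-z_m)^{a_m}$ is peeled off and the derivative sequence $(g_\ast)$ through $g_{n-a_m}$ involves only the variables $z_1,\ldots,z_{m-1}$ and $w$, so $z_m$ enters $\tilde{p}_{\ba,m}(w)$ only through the powers $(w-z_m)^{a_m+i+1}$ coming from $f_{a_m+i+1}$. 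Evaluating at $w = z_j$ with $j < m$, each such term $(z_j - z_m)^{a_m+i+1}$ contributes at most $z_m$ to some bounded power; but crucially, since $\tilde{p}_{\ba,m}(w)$ is an antiderivative of the product $f_{a_m}\cdot g_{n-a_m}$ and the total $z_m$-degree of this product is exactly $a_m$ (as $g_{n-a_m}$ is $z_m$-free), the $z_m$-degree of $\tilde{p}_{\ba,m}(w)$ as a polynomial in $w,z_1,\ldots,z_m$ is still exactly $a_m$. This bounds the $z_m$-degree of $\tilde{p}_{\ba,m}(z_j)$ by $a_m$ for $j < m$, and of $\tilde{p}_{\ba,m}(0)$ by $a_m$ as well.

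From here the three cases follow by tracking how $\frac{\partial}{\partial z_i}$ interacts with the $z_m$-degree. If $i < m$, differentiation in $z_i$ does not change the $z_m$-degree, so for $i,j < m$ the entry $(\bj_\ba)_{ij} = (a_1!\cdots a_m!)\frac{\partial}{\partial z_i}[\tilde{p}_{\ba,m}(z_j) - \tilde{p}_{\ba,m}(0)]$ has $z_m$-degree at most $a_m$; one checks the bound $a_m$ is actually attained (for instance by examining the $i = 0$-shift leading term, or by noting the lowest-order term $f_{a_m+1}(z_j)g_{n-a_m}(z_j)$ already contains $(z_j - z_m)^{a_m+1}$ whose $z_m$-degree is $a_m+1$, but the $w$-antiderivative structure caps it — this needs the careful degree count above). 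If $j = m$, we should not emphasize $z_m$ but instead recall directly from Definition~\ref{def:gen-poly} that $p_\ba(z_m) = \int_0^{z_m}(w-z_1)^{a_1}\cdots(w-z_m)^{a_m}\,dw$ is, as a polynomial in $z_m$ alone after integrating, of degree $n+1$, so $\frac{\partial}{\partial z_i}p_\ba(z_m)$ has $z_m$-degree $n$ (whether $i = m$, in which case we also differentiate the integrand, or $i < m$). If $j < i = m$, we emphasize $z_m$ and then $\frac{\partial}{\partial z_m}$ applied to a polynomial of $z_m$-degree at most $a_m$ drops the degree by one, giving at most $a_m - 1$; attainment again follows from the explicit leading behavior.

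The main obstacle I anticipate is establishing the \emph{sharpness} of these bounds rather than the upper bounds themselves: showing that the coefficient of $z_m^{a_m}$ (resp. $z_m^{a_m-1}$, resp. $z_m^n$) is genuinely nonzero. For the upper bounds, the emphasize-$z_m$ construction and a clean "total $z_m$-degree of an antiderivative equals that of the integrand" observation suffice. For sharpness, the most reliable route is probably to identify the unique term in the expansion of $\tilde{p}_{\ba,m}$ achieving the maximal $z_m$-power and verify its coefficient survives differentiation — concretely, the $i = 0$ term $f_{a_m+1}(w)\,g_{n-a_m}(w)$ contributes $\frac{1}{(a_m+1)!}(w-z_m)^{a_m+1}g_{n-a_m}(w)$, whose top $z_m$-power after evaluating and differentiating can be read off explicitly. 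I would also double-check the boundary interaction at $j = m$ carefully, since that case genuinely uses the integral form directly rather than the emphasized decomposition, and confirm there is no hidden cancellation when $i = m = j$.
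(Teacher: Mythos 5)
Your proposal has a genuine gap at its central step. You claim that because the integrand $f_{a_m}\cdot g_{n-a_m}$ has $z_m$-degree exactly $a_m$, the emphasized antiderivative $\tilde{p}_{\ba,m}(w)$ also has $z_m$-degree $a_m$, and that $\tilde{p}_{\ba,m}(0)$ has $z_m$-degree at most $a_m$ as well. Both statements are false: antidifferentiation in $w$ preserves $z_m$-degree only for the antiderivative normalized to vanish at a $z_m$-free basepoint (i.e.\ $\int_0^{z_j}$), whereas $\tilde{p}_{\ba,m}$ is the particular antiderivative $\sum_i (-1)^i f_{a_m+1+i}\cdot g_{n-a_m-i}$, whose terms carry factors $(w-z_m)^{a_m+1+i}$ of $z_m$-degree up to $n+1$. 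Concretely, for $\ba=(1,1)$ with $m=2$ one has $\tilde{p}_{\ba,2}(w)=\tfrac{1}{2}(w-z_2)^2(w-z_1)-\tfrac{1}{6}(w-z_2)^3$, which has $z_2$-degree $3$, not $a_2=1$; and $\tilde{p}_{\ba,2}(0)=-\tfrac{1}{2}z_1z_2^2+\tfrac{1}{6}z_2^3$ has $z_2$-degree $3$. Indeed, your claimed bound on $\tilde{p}_{\ba,m}(0)$ contradicts observation (2) of Example~\ref{ex:gen-anti}, which says $\tilde{p}_{\ba,m}(0)$ is divisible by $z_m^{a_m+1}$. What is true is that the \emph{difference} $\tilde{p}_{\ba,m}(z_j)-\tilde{p}_{\ba,m}(0)=\frac{1}{a_1!\cdots a_m!}\,p_\ba(z_j)=\frac{1}{a_1!\cdots a_m!}\int_0^{z_j}(w-z_1)^{a_1}\cdots(w-z_m)^{a_m}\,dw$ has $z_m$-degree at most $a_m$ when $j<m$, since the integrand has $z_m$-degree $a_m$ and the limits are $z_m$-free --- but once you say this, the emphasis machinery is an unnecessary detour, and it is precisely what introduced the error.

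This is in fact the paper's route: it works directly with $(\bj_\ba)_{ij}=\frac{\partial}{\partial z_i}\int_0^{z_j}(w-z_1)^{a_1}\cdots(w-z_m)^{a_m}\,dw$, expands the integrand, and identifies the term carrying the top power of $z_m$ that survives $\frac{\partial}{\partial z_i}$: namely $w^{n-a_m}(-z_m)^{a_m}$ when $i=m$, and $w^{n-a_m-1}(-z_i)(-z_m)^{a_m}$ when $i<m$, with the case $j=m$ picking up the extra powers of $z_m$ from the $z_m$-dependent upper limit of integration. Your treatment of $j=m$ (using the integral directly) is consistent with this, but your cases $i,j<m$ and $j<i=m$ rest on the flawed degree claim, and the sharpness of all three bounds --- which is part of the statement --- is left as ``probably'' verifiable rather than established by exhibiting a surviving term with nonzero coefficient. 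To repair the argument, drop the emphasized decomposition here, argue the upper bounds from the integral form as above, and verify attainment by computing the coefficient of the extremal power of $z_m$ (e.g.\ via Proposition~\ref{prop:two-factor-comp} for the entry $(\bj_\ba)_{mm}$).
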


\begin{proof}
    By Definition~\ref{def:gen-poly}, we have the following:
    \begin{align*}
        (\bj_\ba)_{ij} &= \frac{\partial}{\partial z_i} p_\ba(z_j) \\
        &= \frac{\partial}{\partial z_i} \int_0^{z_j}
        (w - z_1)^{a_1}\cdots (w-z_m)^{a_m}\ dw
    \end{align*}
    We now expand inside the integral to find the largest power of $z_m$.
    When $i=m$, the largest power appears in the term
    $\frac{\partial}{\partial z_m}\int_0^{z_j}
    w^{n-a_m}(-z_m)^{a_m}\ dw$, and thus the maximum exponent is $a_m-1$ if
    $j < m$ and $n$ if $j = m$. Similarly, when $i<m$, the largest power
    of $z_m$ appears in the term
    $\frac{\partial}{\partial z_i}\int_0^{z_j}
    w^{n-a_m-1}(-z_i)(-z_m)^{a_m}\ dw$,
    and so the maximum exponent is $a_m$ if $j<m$ and $n$ if $j=m$.
\end{proof}

\begin{example}
    Let $\ba = (3,7,2,6)$. Then the highest exponent of $z_4$ in the entry
    $(\bj_\ba)_{ij}$ is $6$ if $i,j<4$, $5$ if $j< i= 4$, and
    $18$ if $j = 4$. This information is easily visualized in the
    matrix
    \[
        \begin{pmatrix}
            6 & 6 & 6 & 18 \\
            6 & 6 & 6 & 18 \\
            6 & 6 & 6 & 18 \\
            5 & 5 & 5 & 18
        \end{pmatrix}
    \]
    where each entry contains the largest exponent of $z_4$ in the corresponding
    entry of $\bj_\ba$. As a consequence, it is easy to see that the appearance of
    $\lt(\det \bj_\ba)$ in the Leibniz formula is restricted to the summands
    which include the entry $(\bj_\ba)_{44}$.
\end{example}

\begin{lem}[Leading term of $\det \bj_\ba$]\label{lem:lt-det-ja}
    Let $\ba = (a_1,\ldots,a_m)$ be an $m$-tuple of positive integers.
    Then the leading term of $\det \bj_\ba$  
    may be computed recursively via the formula
    \[
        \lt(\det \bj_\ba) =(-z_m)^{a_m(m-1)}\cdot  
         \lt(\det \bj_{\ba^\prime})\cdot
         \lt((\bj_\ba)_{mm})
    \]
    where $\ba^\prime = (a_1,\ldots,a_{m-1})$.
\end{lem}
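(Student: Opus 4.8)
The plan is to argue by induction on $m$. The base case $m=1$ is a direct computation: one finds $p_{(a_1)}(z)=\tfrac{1}{a_1+1}\big((z-z_1)^{a_1+1}-(-z_1)^{a_1+1}\big)$, hence $\theta_{(a_1)}(z_1)=\tfrac{(-1)^{a_1}}{a_1+1}z_1^{a_1+1}$ and $\bj_{(a_1)}$ is the $1\times 1$ matrix $\big((-z_1)^{a_1}\big)$; this matches the stated formula once one reads the empty product as $1$ and $\det$ of the empty matrix as $1$. For the inductive step I would expand $\det\bj_\ba$ via the Leibniz formula and isolate the summands of maximal $z_m$-degree, with Lemma~\ref{lem:zm-exponents} supplying the quantitative control.

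For a permutation $\sigma\in\sym_m$, Lemma~\ref{lem:zm-exponents} pins down the $z_m$-degree of the product $\prod_{i\in[m]}(\bj_\ba)_{\sigma(i),i}$: the column $i=m$ contributes $z_m$-degree $n$, every column $i<m$ with $\sigma(i)<m$ contributes $a_m$, and the single column $i<m$ with $\sigma(i)=m$ (present exactly when $\sigma(m)\neq m$) contributes $a_m-1$. So this degree equals $n+(m-1)a_m$ when $\sigma(m)=m$ and is strictly smaller otherwise. Since the lexicographic order compares $z_m$-degree first, $\lt(\det\bj_\ba)$ agrees with the leading term of the partial sum over $\{\sigma:\sigma(m)=m\}$, provided that partial sum does not itself drop to a lower $z_m$-degree (discussed below). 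The standard restriction-of-permutations bookkeeping rewrites that partial sum as $(\bj_\ba)_{mm}\cdot\det N$, where $N$ is the $(m-1)\times(m-1)$ matrix with $N_{ij}=(\bj_\ba)_{ij}$ for $i,j<m$.

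Next I would relate $N$ to $\bj_{\ba^\prime}$. Writing $(w-z_m)^{a_m}=(-z_m)^{a_m}+(\text{terms of strictly smaller }z_m\text{-degree})$ inside the integral defining $p_\ba$ and then differentiating in $z_i$ for $i<m$ (an operation not involving $z_m$), one obtains for $i,j<m$
\[
(\bj_\ba)_{ij}=(-z_m)^{a_m}(\bj_{\ba^\prime})_{ij}+(\text{terms of }z_m\text{-degree}<a_m).
\]
Substituting this into the Leibniz expansion of $\det N$ and keeping the top $z_m$-degree part yields $\det N=(-z_m)^{(m-1)a_m}\det\bj_{\ba^\prime}+(\text{strictly smaller }z_m\text{-degree})$; as $\det\bj_{\ba^\prime}$ is free of $z_m$ and nonzero by the inductive hypothesis, $\lt(\det N)=(-z_m)^{(m-1)a_m}\lt(\det\bj_{\ba^\prime})$ (alternatively one may invoke Proposition~\ref{prop:lt-det} for $N$ and for $\bj_{\ba^\prime}$). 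Combining this with the previous paragraph and the multiplicativity $\lt(fg)=\lt(f)\lt(g)$ gives $\lt(\det\bj_\ba)=\lt\big((\bj_\ba)_{mm}\big)\,\lt(\det N)=(-z_m)^{(m-1)a_m}\,\lt(\det\bj_{\ba^\prime})\,\lt\big((\bj_\ba)_{mm}\big)$, which is the asserted recursion.

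The crux is the bracketed proviso: confirming that the surviving partial sum really attains $z_m$-degree $n+(m-1)a_m$, with no cancellation at the top. This rests on two nonvanishing statements — that $z_m^n$ occurs with nonzero coefficient in $(\bj_\ba)_{mm}$, which is precisely the $j=m$ case of Lemma~\ref{lem:zm-exponents}, and that $\det\bj_{\ba^\prime}\neq0$, which is exactly where the induction is used (the base case provides it, and the recursion carries it forward). Granting both, the product of the relevant leading terms is a nonzero monomial of $z_m$-degree $n+(m-1)a_m$, which beats the $z_m$-degree of every $\sigma(m)\neq m$ summand, so no cancellation can occur and the leading term is correctly identified.
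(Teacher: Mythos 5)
Your proposal is correct and follows essentially the same route as the paper: use Lemma~\ref{lem:zm-exponents} to see that only Leibniz summands through $(\bj_\ba)_{mm}$ can achieve the top $z_m$-degree, and factor $(-z_m)^{a_m}$ out of the remaining $(m-1)\times(m-1)$ block to reduce to $\bj_{\ba^\prime}$. Your induction wrapper and explicit no-cancellation check are just a more careful rendering of what the paper handles via Proposition~\ref{prop:lt-det} and the Leibniz formula.
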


\begin{proof}
    By Proposition~\ref{prop:lt-det}, the leading term of
    $\det \bj_\ba$ is equal to the leading term of the 
    determinant of the matrix obtained by taking the leading 
    term of each entry in $\bj_\ba$. Since $z_m$ is the 
    highest-ordered variable, we know that
    $\lt((\bj_\ba)_{ij})$ has the highest power of $z_m$ in $(\bj_\ba)_{ij}$
    as a factor.
    Thus, Lemma~\ref{lem:zm-exponents} and the Leibniz formula tell us
    that $\lt(\det \bj_\ba)$ has a factor of $z_m^{a_m(m-1) + n}$,
    and this term is obtained from the Leibniz formula only in terms
    which include $(\bj_\ba)_{mm}$. As for the entries with $i,j < m$, 
    \begin{align*}
        \lt((\bj_\ba)_{ij}) &= \lt\left(
        \frac{\partial}{\partial z_i} \int_0^{z_j}
        (w - z_1)^{a_1}\cdots (w-z_m)^{a_m}\ dw \right) \\
        &= \lt\left(\frac{\partial}{\partial z_i}\int_0^{z_j} 
        (w - z_1)^{a_1}\cdots (w-z_{m-1})^{a_{m-1}}(-z_m)^{a_m}\ dw\right) \\
        &= \lt\left((-z_m)^{a_m} \frac{\partial}{\partial z_i}\int_0^{z_j} 
        (w - z_1)^{a_1}\cdots (w-z_{m-1})^{a_{m-1}}\ dw\right) \\
        &= (-z_m)^{a_m}\lt((\bj_{\ba^\prime})_{ij})
    \end{align*}
    and so by the Leibniz formula, the claim is proven.
\end{proof}

We are now ready to compute the coefficient of the leading term of $\det \bj_\ba$.

\begin{prop}[Constant coefficient]\label{prop:constant-term}
    Let $\ba = (a_1,\ldots,a_m)$ be an $m$-tuple of positive integers. 
    Then $\lt(\det \bj_\ba)$ has coefficient
    $(-1)^{a_1+2a_2+\cdots +ma_m} \binom{n}{a_1,\ldots,a_m}^{-1}$.
\end{prop}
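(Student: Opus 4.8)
The plan is to prove the claim by induction on $m$, peeling off the last variable via the recursion established in Lemma~\ref{lem:lt-det-ja}. For the base case $m=1$ we have $\ba=(a_1)$ with $a_1=n$, and $p_\ba(z)=\int_0^z (w-z_1)^{a_1}\,dw$, so a direct computation gives $(\bj_\ba)_{11}=\frac{\partial}{\partial z_1}p_\ba(z_1)=(-z_1)^{a_1}$. Its coefficient is $(-1)^{a_1}=(-1)^{1\cdot a_1}\binom{a_1}{a_1}^{-1}$, which is the asserted value (one could also cite Proposition~\ref{prop:two-factor-comp} with $a=0$, $b=n$).

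For the inductive step, set $\ba^\prime=(a_1,\dots,a_{m-1})$ and $n^\prime=a_1+\cdots+a_{m-1}$, and assume $\lt(\det\bj_{\ba^\prime})$ has coefficient $(-1)^{a_1+2a_2+\cdots+(m-1)a_{m-1}}\binom{n^\prime}{a_1,\dots,a_{m-1}}^{-1}$. By Lemma~\ref{lem:lt-det-ja} it then remains to identify $\lt((\bj_\ba)_{mm})$. Writing $q(w)=\prod_{j<m}(w-z_j)^{a_j}=\sum_{\ell=0}^{n^\prime}c_\ell w^\ell$ with $c_\ell\in\C[z_1,\dots,z_{m-1}]$ and $c_{n^\prime}=1$, we have $(\bj_\ba)_{mm}=\frac{\partial}{\partial z_m}\int_0^{z_m}q(w)(w-z_m)^{a_m}\,dw$; expanding $(w-z_m)^{a_m}$, integrating termwise, and differentiating shows that the coefficient of $z_m^{\ell+a_m}$ is $c_\ell$ times a scalar. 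Hence the highest power of $z_m$ appearing is $z_m^n$, coming only from $\ell=n^\prime$, and its coefficient is a pure constant (since $c_{n^\prime}=1$); this is what makes that term the leading term. Moreover that constant equals the coefficient of $z_m^n$ in $\frac{\partial}{\partial z_m}\int_0^{z_m} w^{n^\prime}(w-z_m)^{a_m}\,dw$, which by Proposition~\ref{prop:two-factor-comp} (case $a=n^\prime$, $b=a_m$, $z_0=z_m$) equals $(-1)^{a_m}\tfrac{n^\prime!\,a_m!}{n!}$. Thus $\lt((\bj_\ba)_{mm})=(-1)^{a_m}\tfrac{n^\prime!\,a_m!}{n!}\,z_m^n$.

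Substituting into Lemma~\ref{lem:lt-det-ja}, the coefficient of $\lt(\det\bj_\ba)$ is the product of the sign $(-1)^{a_m(m-1)}$ from $(-z_m)^{a_m(m-1)}$, the inductive coefficient $(-1)^{a_1+2a_2+\cdots+(m-1)a_{m-1}}\binom{n^\prime}{a_1,\dots,a_{m-1}}^{-1}$, and $(-1)^{a_m}\tfrac{n^\prime!\,a_m!}{n!}$. The exponent of $-1$ is $a_m(m-1)+\bigl(a_1+2a_2+\cdots+(m-1)a_{m-1}\bigr)+a_m=a_1+2a_2+\cdots+ma_m$, and the magnitude telescopes as $\tfrac{a_1!\cdots a_{m-1}!}{n^\prime!}\cdot\tfrac{n^\prime!\,a_m!}{n!}=\tfrac{a_1!\cdots a_m!}{n!}=\binom{n}{a_1,\dots,a_m}^{-1}$, giving exactly the claimed coefficient and closing the induction.

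I expect the only subtle point to be the identification of $\lt((\bj_\ba)_{mm})$ in the second paragraph: one must check that the top-$z_m$-degree part of $(\bj_\ba)_{mm}$ genuinely decouples into the ``$w^{n^\prime}\cdot(w-z_m)^{a_m}$'' contribution, so that its coefficient is a constant (and hence really is the leading term) and Proposition~\ref{prop:two-factor-comp} applies verbatim. Everything else is a routine sign count and the factorial telescoping above, so this bookkeeping is the crux of the argument.
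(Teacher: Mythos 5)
Your proof is correct and follows essentially the same route as the paper: induction on $m$ with the same base case, the recursion of Lemma~\ref{lem:lt-det-ja}, and Proposition~\ref{prop:two-factor-comp} to evaluate $\lt((\bj_\ba)_{mm})$, with the signs and factorials telescoping identically. Your expansion $q(w)=\sum_\ell c_\ell w^\ell$ is just a slightly more explicit justification of the step the paper performs by replacing the lower-index factors with $w^{n-a_m}$ inside the integral (only note that the parenthetical appeal to Proposition~\ref{prop:two-factor-comp} with $a=0$ sits outside its stated hypothesis of positive exponents, but your direct base-case computation makes that citation unnecessary).
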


\begin{proof}
	We prove by induction on $m$. When $m=1$, we have $\ba = (a_1)$
	and 
	\[p_\ba(z) = \int_0^z (w-z_1)^{a_1}\ dw = \frac{1}{a_1+1}
	\Big[ (z-z_1)^{a_1+1} - (-z_1)^{a_1+1} \Big],\]
	so $\bj_\ba$ is a $1\times 1$ matrix 
	with a single entry:
	\[
	    (\bj_\ba)_{1,1} = \frac{d}{d z_1} \Big[p_\ba(z_1)\Big]
	    = \frac{d}{d z_1} \left[ \frac{(-1)^{a_1}z_1^{a_1+1}}{a_1+1} \right]
	    = (-1)^{a_1}z_1^{a_1}
	\]
	Thus $\lt(\det \bj_\ba) = (-1)^{a_1}z_1^{a_1}$ and so the coefficient is
	$(-1)^{a_1}$.
	
	Now, let $\ba = (a_1,\ldots,a_m)$ and suppose the claim holds 
	for the $(m-1)\times(m-1)$ matrix $\bj_{\ba^\prime}$, where 
	$\ba^\prime = (a_1,\ldots,a_{m-1})$. Then Lemma~\ref{lem:lt-det-ja}
	tells us that
	\[
        \lt(\det \bj_\ba) =(-z_m)^{a_m(m-1)}\cdot  
         \lt(\det \bj_{\ba^\prime})\cdot
         \lt((\bj_\ba)_{mm})
    \]
	and by Proposition~\ref{prop:two-factor-comp}, we have the following:
	\begin{align*}
	    \lt((\bj_\ba)_{mm}) 
	    &= \lt\left(\frac{\partial}{\partial z_m}\int_0^{z_m} 
        (w-z_1)^{a_1}\cdots(w-z_m)^{a_m}\ dw\right) \\
        &= \lt\left(\frac{\partial}{\partial z_m}\int_0^{z_m} 
        w^{n-a_m}(w-z_m)^{a_m}\ dw\right) \\
        &= (-1)^{a_m}\frac{a_m!\, (n-a_m)!}{n!}z_m^n
	\end{align*}
	So the leading coefficient of $(\bj_\ba)_{mm}$ is 
	\[
	    (-1)^{a_m}\frac{a_m!\, (n-a_m)!}{n!}
	\]
	and by the inductive hypothesis, the leading coefficient of 
	$\det \bj_{\ba^\prime}$ is
	\[
	    \left((-1)^{a_1+2a_2+\cdots +(m-1)a_{m-1}} 
	    \binom{a_1+\cdots+a_{m-1}}{a_1,\ldots,a_{m-1}}^{-1}\right).
	\]
	Putting it all together, we have that the leading coefficient of 
	$\det \bj_\ba$ is
	\[
	    (-1)^{a_1+2a_2+\cdots +ma_m}
	    \frac{a_1!\cdots a_{m-1}!}{(n-a_m)!}
	    \frac{a_m!(n-a_m)!}{n!}
	\]
	\[
	    = (-1)^{a_1+2a_2+\cdots +ma_m} \binom{n}{a_1,\ldots,a_{m}}^{-1}
	\]
	as desired.
\end{proof}

\section{Connections to Dyson's Conjecture}
\label{sec:dyson}

The polynomial expression presented in Theorem~\ref{thm:general-det} 
is connected to a well-known multinomial identity,
originally conjectured by the physicist Freeman Dyson \cite{dyson62}
and proven independently by Gunson \cite{gunson62} and Wilson
\cite{wilson62}. A particularly short proof was later given
by Good \cite{good70}.

\begin{thm}[Dyson's Conjecture]
    Let $a_1,\ldots,a_m$ be positive integers and let $n = a_1 + \cdots + a_m$.
	Then the constant term of the Laurent polynomial
	\[
	    \prod_{\substack{j,k\in [m] \\ j\neq k}} 
	    \left(1 - \frac{z_j}{z_k} \right)^{a_j} 
	\]
	is equal to the multinomial coefficient
	$\binom{n}{a_1, \ldots, a_m}$.
\end{thm}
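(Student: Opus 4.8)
The plan is to deduce Dyson's conjecture directly from Theorem~\ref{thm:general-det}. Write $\mathrm{CT}$ for the constant-term functional on Laurent polynomials and, for a polynomial $F$, write $[F]_\mu$ for the coefficient of a monomial $\mu$ in $F$. The first step is pure bookkeeping: clearing denominators in the Dyson product gives
\[
    \prod_{\substack{j,k\in[m]\\ j\neq k}}\left(1-\frac{z_j}{z_k}\right)^{a_j}
    = \frac{1}{\prod_{k\in[m]}z_k^{\,n-a_k}}\;
    \prod_{\substack{j,k\in[m]\\ j\neq k}}(z_k-z_j)^{a_j},
\]
since for each fixed $k$ the factors carrying that second index contribute a denominator $\prod_{j\neq k}z_k^{a_j}=z_k^{\,n-a_k}$. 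Hence $\mathrm{CT}$ of the left-hand side equals the coefficient of $\prod_k z_k^{\,n-a_k}$ in the honest polynomial $\prod_{j\neq k}(z_k-z_j)^{a_j}$.

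Next I would substitute Theorem~\ref{thm:general-det}. Because $\prod_j(-z_j)^{a_j}=(-1)^n\prod_j z_j^{a_j}$, that identity reads $\binom{n}{a_1,\ldots,a_m}\det\bj_\ba = (-1)^n\bigl(\prod_j z_j^{a_j}\bigr)\bigl(\prod_{j\neq k}(z_k-z_j)^{a_j}\bigr)$, and since $\det\bj_\ba$ is divisible by $\prod_j z_j^{a_j}$ (Proposition~\ref{prop:col-divisible}) we may rewrite this as
\[
    \prod_{\substack{j,k\in[m]\\ j\neq k}}(z_k-z_j)^{a_j}
    = (-1)^n\binom{n}{a_1,\ldots,a_m}\,\frac{\det\bj_\ba}{\prod_{j\in[m]}z_j^{a_j}}.
\]
Taking the coefficient of $\prod_k z_k^{\,n-a_k}$ on both sides — and observing that multiplying this monomial by $\prod_j z_j^{a_j}$ produces $z_1^n z_2^n\cdots z_m^n$ — yields
\[
    \mathrm{CT}\!\left(\prod_{\substack{j,k\in[m]\\ j\neq k}}\left(1-\tfrac{z_j}{z_k}\right)^{a_j}\right)
    = (-1)^n\binom{n}{a_1,\ldots,a_m}\cdot\bigl[\det\bj_\ba\bigr]_{z_1^n z_2^n\cdots z_m^n}.
\]
So Dyson's conjecture is equivalent to the single assertion that the coefficient of the ``balanced'' monomial $z_1^n z_2^n\cdots z_m^n$ in $\det\bj_\ba$ equals $(-1)^n$.

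Establishing $[\det\bj_\ba]_{z_1^n\cdots z_m^n}=(-1)^n$ is the heart of the matter, and to avoid circularity it must be carried out from the definition of $\bj_\ba$ rather than from Theorem~\ref{thm:general-det}. I would expand $\det\bj_\ba$ by the Leibniz formula, using the identity $(\bj_\ba)_{ij}=-a_i\int_0^{z_j}(w-z_i)^{a_i-1}\prod_{\ell\neq i}(w-z_\ell)^{a_\ell}\,dw$ (immediate from Definition~\ref{def:gen-poly}, since the boundary term at $w=z_j$ carries the factor $(z_j-z_j)^{a_j}=0$), substitute $w=z_j u$ in each entry to expose its $z_j$-dependence, and then determine which permutations $\sigma$ and which expanded monomials can produce the balanced monomial; the Beta-integral evaluations of Section~\ref{sec:antiderivatives} (Proposition~\ref{prop:two-factor-comp} together with the ``emphasizing a factor'' bookkeeping of Example~\ref{ex:gen-anti}) supply the needed coefficients, and an induction on $m$ patterned on Proposition~\ref{prop:constant-term} would organize the recursion. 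I expect this to be the main obstacle: unlike the leading term, the monomial $z_1^n\cdots z_m^n$ is never confined to a single summand of the Leibniz expansion, so isolating its coefficient amounts to summing a genuine multinomial identity over ``circulations'' of exponents — a difficulty on par with the classical proofs of Dyson's conjecture, which is exactly what the equivalence above leads one to expect.
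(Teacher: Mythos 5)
Your reduction is correct as far as it goes, and it is essentially what Section~\ref{sec:dyson} of the paper already records: clearing denominators and substituting Theorem~\ref{thm:general-det} shows that Dyson's conjecture is equivalent to the assertion that the balanced monomial $z_1^n z_2^n\cdots z_m^n$ occurs in $\det\bj_\ba$ with coefficient $(-1)^n$. But the paper does not offer this as a proof of Dyson's conjecture; it states the theorem as a known result, with proofs due to Gunson \cite{gunson62}, Wilson \cite{wilson62}, and Good \cite{good70}, precisely because Theorem~\ref{thm:general-det} by itself yields only the equivalence and not the value of that coefficient.

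The genuine gap is therefore your final paragraph, which is where all of the content of Dyson's conjecture now resides, and it is a plan rather than an argument. Nothing in the paper's machinery delivers $[\det\bj_\ba]_{z_1^n\cdots z_m^n}=(-1)^n$: the one coefficient extraction the paper does carry out (Proposition~\ref{prop:constant-term}) succeeds only because Lemma~\ref{lem:zm-exponents} confines the lexicographic leading monomial to the single Leibniz summand passing through $(\bj_\ba)_{mm}$, which is what makes the induction on $m$ close; as you yourself note, the balanced monomial enjoys no such confinement, so your proposed Leibniz expansion leaves a sum of nontrivial contributions over many permutations, and you give no mechanism by which that sum collapses to $(-1)^n$. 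Your closing admission that this step is ``a difficulty on par with the classical proofs of Dyson's conjecture'' is accurate, and it means the proposal stops exactly where a proof would have to begin. As written, you have re-derived the (correct, and already-in-the-paper) equivalence, not the theorem; to finish you must either cite the classical proofs, as the paper does, or actually execute the balanced-coefficient computation, which remains entirely open in your write-up.
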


This is connected to Theorem 3.2. Define $\ba = (a_1,\ldots,a_m)$ and consider:
\begin{align*}
    \det \bj_\ba &= \frac{1}{\binom{n}{a_1,\ldots,a_m}}
	\left( \prod_{j \in [m]} (-z_j)^{a_j} \right)
	\left( \prod_{\substack{j,k\in [m] \\ j\neq k}} (z_k-z_j)^{a_j} \right) \\
	&= \frac{1}{\binom{n}{a_1,\ldots,a_m}}
	\left( \prod_{j \in [m]} (-z_j)^{a_j} \right)
	\left( \prod_{\substack{j,k\in [m] \\ j\neq k}} 
	\left(z_k\left(1-\frac{z_j}{z_k}\right)\right)^{a_j} \right) \\
	&= \frac{(-1)^n}{\binom{n}{a_1,\ldots,a_m}}
	\left( \prod_{j\in [m]} (z_j)^{n} \right)
	\left( \prod_{\substack{j,k\in [m] \\ i\neq j}} 
	\left(1-\frac{z_j}{z_k}\right)^{a_j} \right)
\end{align*}
From here, we can see that Dyson's conjecture is equivalent
to the fact that the monomial $\prod_{j\in [m]} z_j^{n}$ appears 
in $\det \bj_\ba$ with coefficient $(-1)^n$. In other words, 
if we divide the $j$-th column of $\bj_\ba$ by the monomial 
$z_j^n$ for each $j$, then the determinant becomes a Laurent 
polynomial with constant term $(-1)^n$.

\section{A Stratification for $\mathbb{C}^n$}
\label{sec:stratification}

As discussed in Section~\ref{sec:determinants}, the map
$\theta\colon\mathbb{C}^n \to \mathbb{C}^n$ which sends
each $\bz = (z_1,\ldots,z_n)$ to $(p(z_1),\ldots,p(z_n))$
has $\bj$ as its Jacobian matrix. By Theorem~\ref{thm:beardon-det}
and Corollary~\ref{cor:simple-det}, $\bj$ is invertible if and 
only if the entries of $\bz$ are distinct and nonzero. Together 
with the Inverse Function Theorem (see \cite{gunningrossi65}, for 
example), this implies that $\theta$ is a local homeomorphism at $\bz$ 
if $z_1,\ldots,z_n$ are distinct and nonzero. In this section, we 
provide a similar interpretation for Theorem~\ref{thm:general-det}.

\begin{defn}[Partitions]
    Recall that if $\lambda$ is a collection of $\ell$ nonempty pairwise disjoint
    sets with union equal to a given set, then $\lambda$
    is a \emph{partition} of that set with $\ell$ \emph{blocks}.
    The partition $\lambda$ is a \emph{refinement} of the
    partition $\mu$ (or \emph{finer} than $\mu$) if each block in
    $\mu$ is a union of blocks in $\lambda$.
\end{defn}

\begin{defn}[Stratifying $\mathbb{C}^n$]
    For each $\bz = (z_1,\ldots,z_n) \in \mathbb{C}^n$, define 
    $\partition(\bz)$ to be the unique partition of $[n]$ such that 
    $i$ and $j$ belong to the same block of $\lambda$ if and only if $z_i = z_j$. Then, for
    each partition $\lambda$ of $[n]$, define the
    subspace 
    \[
        \mathbb{C}^\lambda = \{\bz \in \mathbb{C}^n 
        \mid \partition(\bz) = \lambda\}
    \]
    of $\mathbb{C}^n$. First, observe that $\C^\lambda$ and $\C^\mu$ 
    are disjoint subspaces if and only if $\lambda$ and $\mu$ are 
    distinct partitions, and that each point in $\mathbb{C}^n$ lies 
    in a unique $\C^\lambda$. In other words, these subspaces form a 
    partition of $\mathbb{C}^n$. Next, for each partition $\lambda$ 
    of $[n]$, the topological closure $\overline{\C^\lambda}$ is the 
    linear subspace of $\C^n$ consisting of all points where we only 
    require that $z_i = z_j$ if $i$ and $j$ belong to the same block. Thus, we see that $\overline{\C^\lambda}$ is the union of the disjoint 
    subspaces $\C^\mu$, where $\mu$ is a partition
    of $[n]$ and $\lambda$ is a refinement of $\mu$. Finally,    
    if $\bz \in \C^\lambda$, then $\theta(\bz) \in \overline{\C^\lambda}$.
\end{defn}

Both $\C^\lambda$ and $\overline{\C^\lambda}$ are familiar spaces. 
Fix a partition $\lambda = \{S_1,\ldots,S_\ell\}$ of $[n]$ and define a map 
$\phi: \overline{\C^\lambda} \to \mathbb{C}^\ell$ by sending
each $\bz \in \C^\lambda$ to $\phi(\bz) = (y_1,\ldots,y_\ell)$, where
each $y_i$ is the common value shared by entries in $\bz$ with indices from $S_i$.
For example, if $\lambda = \{\{1,3\},\{2,5\},\{4\}\}$ and 
$\bz = (a,b,a,b,b) \in \C^\lambda$, then we have $\phi(\bz) = (a,b,b)$. 
It is then clear that $\phi$ is a homeomorphism and the restriction of
$\phi$ to $\C^\lambda$ is a homeomorphism onto its image: the space
of all points in $\mathbb{C}^\ell$ with distinct coordinates. This image
is well-known as the complement of the \emph{complex braid arrangement},
i.e. the complement of the hyperplanes in $\mathbb{C}^\ell$ defined by the
equations $y_i = y_j$. If we denote the union of these hyperplanes by
$\mathcal{A}_\ell$, then we have that $\C^\lambda$ and $\overline{\C^\lambda}$
are homeomorphic via $\phi$ to 
$\mathbb{C}^\ell-\mathcal{A}_\ell$ and $\mathbb{C}^\ell$, respectively.

\begin{thm}[Local homeomorphisms]\label{thm:stratification}
    Let $\lambda$ be a partition of $[n]$ and define the map 
    $\theta_\lambda\colon  \C^\lambda\to \overline{\C^\lambda}$ 
    to be the corresponding restriction of $\theta$. Then 
    $\theta_\lambda$ is a local homeomorphism at $\bz \in \C^\lambda$ 
    so long as the entries of $\bz$ are nonzero.
\end{thm}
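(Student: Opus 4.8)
The plan is to reduce the statement to Theorem~\ref{thm:general-det} via the coordinate chart $\phi$ and then apply the Inverse Function Theorem. Fix a partition $\lambda = \{S_1,\ldots,S_\ell\}$ of $[n]$ and let $a_i = |S_i|$, so that $a_1 + \cdots + a_\ell = n$. First I would set up the commuting square: identify $\C^\lambda$ and $\overline{\C^\lambda}$ with $\C^\ell - \mathcal{A}_\ell$ and $\C^\ell$ via the homeomorphism $\phi$, and check that under this identification the restricted map $\theta_\lambda$ becomes exactly the map $\theta_\ba \colon \C^\ell \to \C^\ell$ from Definition~\ref{def:gen-poly} (restricted to $\C^\ell - \mathcal{A}_\ell$). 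This is the key compatibility: if $\bz \in \C^\lambda$ has common value $y_i$ on the block $S_i$, then the canonical polynomial $p_\bz$ for the original tuple factors as $\int_0^z (w - y_1)^{a_1}\cdots(w-y_\ell)^{a_\ell}\,dw = p_\ba(z)$, so $p_\bz$ and $p_\ba$ agree, and the $j$-th coordinate of $\theta(\bz)$ — namely $p_\bz(z_j)$ — equals $p_\ba(y_i)$ whenever $j \in S_i$. Hence $\phi \circ \theta_\lambda = \theta_\ba \circ (\phi|_{\C^\lambda})$, which is precisely what makes the reduction work.

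Next I would address the differentiable structure. Since $\phi$ is the restriction of a linear isomorphism $\C^n \to \C^\ell$ (collapsing equal coordinates), and $\C^\lambda$, $\overline{\C^\lambda}$ are open subsets of affine subspaces, $\theta_\lambda$ is holomorphic and its derivative at $\bz$ is conjugate (via the fixed linear maps $\phi$) to the derivative of $\theta_\ba$ at $\phi(\bz) = (y_1,\ldots,y_\ell)$, whose matrix is $\bj_\ba(y_1,\ldots,y_\ell)$. Therefore the Jacobian of $\theta_\lambda$ at $\bz$ is invertible if and only if $\bj_\ba$ is invertible at $(y_1,\ldots,y_\ell)$. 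By Theorem~\ref{thm:general-det}, this happens if and only if $y_1,\ldots,y_\ell$ are distinct and nonzero. But $\bz \in \C^\lambda$ already forces the $y_i$ to be distinct (that is the definition of the stratum), so the only remaining condition is that the $y_i$ — equivalently, the entries of $\bz$ — are nonzero. Under that hypothesis, $\bj_\ba$ is invertible, and the holomorphic Inverse Function Theorem (\cite{gunningrossi65}) gives that $\theta_\lambda$ is a local homeomorphism (indeed a local biholomorphism) at $\bz$.

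I expect the only genuine obstacle to be the bookkeeping in the first paragraph: verifying cleanly that the original map $\theta$, when restricted to the stratum and read through $\phi$, really is the multiplicity-weighted map $\theta_\ba$ — in particular that the correct block sizes $a_i$ appear as the exponents, and that the target indeed lands in $\overline{\C^\lambda}$ (equivalently, that $\theta_\lambda$ is well-defined as stated), which was already noted in the stratification discussion. Once that identification is in place, the rest is a formal transport of Theorem~\ref{thm:general-det} through a linear change of coordinates together with the Inverse Function Theorem, with no further computation required.
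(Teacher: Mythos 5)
Your proposal is correct and follows essentially the same route as the paper: conjugate $\theta_\lambda$ by the homeomorphism $\phi$ to identify it with $\theta_\ba$ for $\ba=(|S_1|,\ldots,|S_\ell|)$, invoke Theorem~\ref{thm:general-det} for invertibility of $\bj_\ba$ at points with distinct nonzero entries, and conclude with the Inverse Function Theorem. The compatibility check $\phi\circ\theta_\lambda=\theta_\ba\circ\phi|_{\C^\lambda}$ that you flag as the main bookkeeping step is exactly the identification the paper makes.
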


\begin{proof}
    Let $\lambda = \{S_1,\ldots,S_\ell\}$ be a partition of 
    $[n]$ with $a_i = |S_i|$ for each $i$ and define the map 
    $\phi$ as above. Since $\phi$ is a homeomorphism, we have 
    that $\theta_\lambda$ is a local homeomorphism at 
    $\bz \in \C^\lambda$ if and only if the map 
    $\phi\theta_\lambda\phi^{-1}\colon\mathbb{C}^\ell 
    - \mathcal{A}_\ell \to \mathbb{C}^\ell$ 
    is a local homeomorphism at $\phi(\bz)$. If we define 
    $\ba = (a_1,\ldots,a_\ell)$, we then have that 
    $\phi\theta_\lambda\phi^{-1}$ sends each 
    $\by = (y_1,\ldots,y_\ell) \in \mathbb{C}^\ell$ to 
    $(p_{\ba}(y_1),\ldots,p_{\ba}(y_\ell))$.
    In other words, $\phi\theta_\lambda\phi^{-1} = \theta_\ba$
    and by Theorem~\ref{thm:general-det}, the associated 
    Jacobian matrix $\bj_{\ba}$ is invertible if and only if 
    $y_1,\ldots,y_\ell$ are distinct and nonzero. Together with 
    the Inverse Function Theorem, this implies that $\theta_\ba$ 
    is a local homeomorphism at $\by \in \mathbb{C}^\ell - \mathcal{A}_\ell$ 
    if the entries of $\by$ are distinct and nonzero. By the
    definition of $\phi$, we conclude that $\theta_\lambda$ is a 
    local homeomorphism at $\bz\in \C^\lambda$ if the 
    entries of $\bz$ are nonzero.
\end{proof}

\begin{rem}[Lifting critical value motions]
    The explicit local homeomorphism property described in 
    Theorem~\ref{thm:stratification} has an interesting 
    consequence. One can show that, given a specific complex 
    polynomial with distinct roots and a motion of its critical 
    values, there is a unique lift of this motion to a
    subspace of polynomials with critical points that continue to be 
    partitioned in the same fixed manner. This application is among 
    the primary motivations for the current work, and an explicit 
    statement will be stated and proved in a future article.
\end{rem}


\bibliographystyle{amsalpha}
\providecommand{\bysame}{\leavevmode\hbox to3em{\hrulefill}\thinspace}
\providecommand{\MR}{\relax\ifhmode\unskip\space\fi MR }
\providecommand{\MRhref}[2]{%
  \href{http://www.ams.org/mathscinet-getitem?mr=#1}{#2}
}
\providecommand{\href}[2]{#2}

\end{document}